\newcommand{\C}{\mathbb C}
\newcommand{\N}{\mathbb N}
\renewcommand{\H}{\mathbb H}
\renewcommand{\P}{\mathbb P}
\newcommand{\Z}{\mathbb Z}
\newcommand{\cE}{\mathcal E}
\newcommand{\CC}{\mathcal C}
\newcommand{\cF}{\mathcal F}
\newcommand{\cH}{\mathcal H}
\newcommand{\cM}{\mathcal M}
\newcommand{\cN}{\mathcal N}
\newcommand{\cx}{\tilde x}
\newcommand{\fX}{\frak{X}}
\newcommand{\re}{\mathrm{Re}}
\newcommand{\im}{\mathrm{Im}}
\newcommand{\Sol}{\mathrm{Sol}}
\newcommand{\na}{\nabla}
\newcommand{\pa}{\partial}
\newcommand{\ex}{\mathbf{e}}
\newcommand{\cex}{\tilde{\mathbf{e}}}
\renewcommand{\a}{\alpha}
\renewcommand{\b}{\beta}
\newcommand{\g}{\gamma}
\renewcommand{\l}{\lambda}
\newcommand{\cL}{\mathcal{L}}
\newcommand{\e}{\varepsilon}
\newcommand{\f}{\varphi}
\newcommand{\G}{\mathit{\Gamma}}
\newcommand{\w}{\omega}
\newcommand{\reg}{\mathrm{reg}}
\newcommand{\la}{\langle}
\newcommand{\ra}{\rangle}
\newcommand{\tr}{\;^t}
\newcommand{\bv}{\mathbf{v}}
\newcommand{\bw}{\mathbf{w}}
\newcommand{\diag}{\mathrm{diag}}
\newcommand{\pr}{\mathrm{pr}}
\newcommand{\comment}[1]{}
\newcommand{\tH}[1]{H_1(T_x,\cL_x^{#1})}
\newcommand{\lftH}[1]{H_1^{lf}(T_x,\cL_x^{#1})}
\newcommand{\tC}[1]{H^1(T_x,\cL_x^{#1})}
\newcommand{\ctC}[1]{H^1_{c}(T_x,\cL_x^{#1})}
\newcommand{\LS}[1]{\cH_1(\cL^{#1})}
\newcommand{\lfLS}[1]{\cH_1^{lf}(\cL^{#1})}
\newcommand{\SV}[1]{V(\cL^{#1})}
\newcommand{\lfSV}[1]{V^{lf}(\cL^{#1})}
\newcommand{\dfc}{(\f_c)^*}
\title[Monodromy of local systems associated with $F_D$]
{The monodromy representations of local systems
associated with Lauricella's $F_D$ 
}
\author{Keiji Matsumoto}
\address[Matsumoto]{
   Department of Mathematics,
   Hokkaido University,
   Sapporo 060-0810, Japan
}
\email{matsu@math.sci.hokudai.ac.jp}
\keywords{Lauricella's hypergeometric differential equations, 
monodromy representation}
\subjclass[2010]{Primary 32S40; Secondary 33C65}
\date{\today}
\theoremstyle{plain} 
\newtheorem{theorem}{\indent\sc Theorem}[section]
\newtheorem{lemma}[theorem]{\indent\sc Lemma}
\newtheorem{cor}[theorem]{\indent\sc Corollary}
\newtheorem{proposition}[theorem]{\indent\sc Proposition}
\theoremstyle{definition} 
\newtheorem{definition}[theorem]{\indent\sc Definition}
\newtheorem{fact}[theorem]{\indent\sc Fact}
\newtheorem{remark}[theorem]{\indent\sc Remark}
\numberwithin{equation}{section}
\begin{document}
\maketitle
\begin{abstract}
We give the monodromy representations of local systems of twisted homology 
groups associated with Lauricella's  system $\cF_D(a,b,c)$ of 
hypergeometric differential equations under mild conditions on parameters. 
Our representation is effective even in some cases when 
the system $\cF_D(a,b,c)$ is reducible. 
We characterize invariant subspaces under our monodromy representations  
by the kernel or image of a natural map from a finite twisted homology 
group to locally finite one.
\end{abstract}

\section{Introduction}
\label{sec:Intro}

There are several generalizations of 
the hypergeometric equation. 
Lauricella's system $\cF_D(a,b,c)$ of 
hypergeometric differential equations is regarded as the simplest system 
with multi-variables. 
It is regular singular and its rank is one more than the number of variables. 
Its singular locus $S$  
 is given in (\ref{eq:sing-loc}) and 
the fundamental group of its complement can be interpreted by 
the pure braid group.  
The monodromy representation of $\cF_D(a,b,c)$ is studied by 
several authors under the non-integral condition on parameters:
\begin{equation}
\label{eq:old-condition}
\a=(\a_0,\a_1,\dots,\a_m,\a_{m+1},\a_{m+2})=
(-c+\sum_{i=1}^m b_i,-b_1,\dots,-b_m,c-a,a)
\in (\C-\Z)^{m+3},
\end{equation}
where $m$ is the number of variables and the entries  satisfy 
\begin{equation}
\label{eq:cond-wa}
\sum_{i=0}^{m+2} \a_i=0;
\end{equation}
cf. \cite{DM}, \cite{IKSY}, \cite{OT}, \cite{M2}, \cite{T} 
and the references therein. 
As one of them, it is shown in \cite[Theorem 5.1]{M2} that   
circuit transformations are represented by the 
intersection form between twisted homology groups associated with  
the integral representation (\ref{eq:intrep}) of Euler type.  
The results in \cite{M2} are based on a fact that the trivial vector bundle  
$\bigcup_{x\in U_x} \tH{\a}$ is isomorphic to 
the local solution space to $\cF_D(a,b,c)$ on $U_x$, where 
$x\in X=(\P^1)^m-S$, $U_x(\subset X)$ is a simply connected small 
neighborhood of $x$, and $\tH{\a}$ is the twisted homology group 
(refer to (\ref{eq:t-H}) for its definition). 

In this paper, we generalize \cite[Theorem 5.1]{M2} by relaxing 
the condition (\ref{eq:old-condition}) to 
\begin{equation}
\label{eq:non-int}
(\a_0,\a_1,\dots,\a_m,\a_{m+1},\a_{m+2})\notin \Z^{m+3}.
\end{equation}
Note that there are at least two entries 
$\a_{i_{m+1}},\a_{i_{m+2}}\notin \Z$ by (\ref{eq:cond-wa}). 
Under this condition,   
we study the monodromy representations $\cM^\a$ and $\cN^{-\a}$ 
of local systems $\lfLS{\a}$ and $\LS{-\a}$ with fibers $\lftH{\a}$ 
and $\tH{-\a}$, respectively, where 
$\lftH{\a}$ is the locally finite twisted homology group and 
$\tH{-\a}$ is given by the sign change $\a\mapsto -\a$ for $\tH{\a}$.
There is a natural linear map $\jmath_h^\a$ from $\tH{\a}$ to $\lftH{\a}$. 
It is known that this map is isomorphic  under 
the condition (\ref{eq:old-condition}). However, if there is an entry 
$\a_i$ of $\a$ such that $\a_i\in \Z$, then both of 
the kernel and the image of $\jmath_h^\a$ are proper subspaces.
Thus it turns out that the monodromy representations $\cM^\a$ and $\cN^{-\a}$ 
of $\lfLS{\a}$ and 
$\LS{-\a}$ are reducible in this case. 
In spite of this situation, the intersection form 
$\la\;,\;\ra$ between $\lftH{\a}$ and $\tH{-\a}$ is well-defined and perfect.
We express circuit transformations as complex reflections 
with respect to the intersection form $\la\;,\;\ra$ in Theorem \ref{th:main}.
We give their representation matrices with respect to bases of 
$\lftH{\a}$ and $\tH{-\a}$ in Corollary \ref{cor:rep-mat}. We also give 
examples of representation matrices in the case of $m=3$, 
$\a_0,\a_1\in \Z$ and $\a_2+\a_3\in \Z$  in \S \ref{sec:example}.

We can define period matrices $\varPi_c^{lf}(\a,x)$ and 
$\varPi(\a,x)$ by the natural pairing between  $\lftH{\a}$ and 
$\ctC{\a}$, and that between $\tH{\a}$ and $\tC{\a}$, respectively, where 
$\tC{\a}$ and $\ctC{\a}$ are the twisted cohomology group and 
that with compact support (refer to (\ref{eq:twisted-cohomology}) for 
their definitions). 
Under the condition (\ref{eq:old-condition}), each column vector of 
$\varPi_c^{lf}(\a,x)$ and $\varPi(\a,x)$ 
is a fundamental system of solutions to $\cF_D(a',b',c')$ for 
some $(a',b',c')$, of which difference from $(a,b,c)$ is an integral vector.
Under the condition (\ref{eq:non-int}), 
$\cM^\a$ and $\cN^\a$ can be regarded as the monodromy representations of 
$\varPi_c^{lf}(\a,x)$ and $\varPi(\a,x)$, 
though they 
do not always include a fundamental system of solutions to $\cF_D(a,b,c)$. 
In general, the stalk of $\lfLS{\a}$ at $x$ cannot be regarded as 
the local solution space to $\cF_D(a,b,c)$ around $x$ 
under only the condition (\ref{eq:non-int}). 
To identify these spaces,  
we suppose the condition 
\begin{equation}
\label{eq:our-condition}
\a_{i_m},  \a_{i_{m+1}},  \a_{i_{m+2}} \notin \Z, \
\quad \a_{i_{m+1}}+\a_{i_{m+2}}\ne0, 
\end{equation}
and the non negative-integral condition (\ref{eq:non-negative}).
Under these conditions, the monodromy representation $\cM^\a$ can be regarded
as that of $\cF_D(a,b,c)$. 
Similarly, under the conditions (\ref{eq:our-condition}) and 
(\ref{eq:non-negative-0}), 
the monodromy representation $\cN^{-\a}$ can be regarded
as that of $\cF_D(-a,-b,-c)$. 


\section{Lauricella's system $F_D$}
\label{sec:FD}
Lauricella's hypergeometric series $F_D(a,b,c;x)$ is defined by 
$$
F_D(a,b,c;x)
=\sum_{n\in \N^m}
\frac{(a,\sum_{i=1}^m n_i)\prod_{i=1}^m(b_i,n_i)}
{(c,\sum_{i=1}^m n_i)\prod_{i=1}^m(1,n_i)}
\prod_{i=1}^m x_i^{n_i},
$$
where  $x_1,\dots,x_m$ are complex variables with $|x_i|<1$ $(1\le i\le m)$, 
$a$, $b=(b_1,\dots,b_m)$ and $c$ are complex parameters,  
$c\notin -\N=\{0,-1,-2,\dots\}$, and 
$(b_i,n_i)=b_i(b_i+1)\cdots(b_i+ n_i-1)$.
It admits an Euler type integral representation:
\begin{equation}
\label{eq:intrep}
F_D(a,b,c;x)
=\frac{\G(c)} 
{\G(a)\G(c\!-\! a)}
\int_1^\infty u(t,x)\frac{dt}{t\!-\!1}, \quad 
u(t,x)=t^{\sum_i b_i-c}(t\!-\!1)^{c-a}\prod_{i=1}^m(t\!-\! x_i)^{-b_i},
\end{equation}
where the parameters $a$ and $c$ satisfy $\re(c)>\re(a)>0$.

The differential operators 
\begin{align}
\nonumber
x_i(1- x_i)\pa_i^2
+ (1- x_i)\sum\limits_{1\le j\le m}^{j\ne i}x_j\pa_i\pa_j 
+ [c- (a+ b_i+ 1)x_i]\pa_i 
- b_i\sum\limits_{1\le j\le m}^{j\ne i}x_j\pa_j- ab_i,&\\
\label{eq:LHGS}(1\le i\le m)& \\[2mm]
\nonumber
 (x_i-x_j)\pa_i\pa_j- b_j\pa_i+ b_i\pa_j,\hspace{3cm} (1\le i<j\le m)&
\end{align}
annihilate the series $F_D(a,b,c;x)$, where $\pa_i=\dfrac{\pa}{\pa x_i}$.
Lauricella's system $\cF_D(a,b,c)$ is defined by 
the ideal generated by these operators  
in the Weyl algebra $\C[x_1,\dots,x_m]\la \pa_1,\dots,\pa_m\ra$. 
Though the series $F_D(a,b,c;x)$ is not defined 
when $c\in -\N=\{0,-1,-2,\dots\}$, the system $\cF_D(a,b,c)$ 
is valid even in this case. 
It is a regular holonomic system of rank $m+1$ with singular locus 
\begin{equation}
\label{eq:sing-loc}
S=\big\{x\in \C^m\Big| \prod_{i=1}^m [x_i(1-x_i)]
\prod_{1\le i<j\le m} (x_i-x_j)=0\big\}\cup 
(\cup_{i=1}^\infty\{ x_i=\infty\})\subset (\P^1)^m.
\end{equation}
We set 
$$X=(\P^1)^m-S=\big\{(x_1,\dots,x_m)\in \C^m\mid \prod_{0\le i<j\le m+1}
(x_j-x_i)\ne 0\big\},$$
where $x_0=0$ and $x_{m+1}=1$. 
We introduce a notation 
$$\cx=(x_0,x_1,\dots,x_m,x_{m+1},x_{m+2})=(0,x_1,\dots,x_m,1,\infty)
=(0,x,1,\infty)$$
for $x\in X$.
Let $\Sol_x(a,b,c)$ be the vector space of solutions to $\cF_D(a,b,c)$ on 
a small simply connected neighborhood $U(\subset X)$ of $x$. 
It is called the local solution space to $\cF_D(a,b,c)$ around $x$, 
and it is $(m+1)$-dimensional. 
If the improper integral 
\begin{equation}
\label{eq:int-rep}
\int_{x_i}^{x_j} u(t,x)\frac{dt}{t\!-\!1}\quad (0\le i<j\le m+2)
\end{equation}
converges, then it gives an element of $\Sol_x(a,b,c)$.

We set 
$$
\fX=\Big\{(x,t)\in \C^m\times \C\;\Big|\; x\in X,\ 
\prod_{i=0}^{m+1} (t-x_i)\ne0
\big\},
$$
and  
$$
T_x=\Big\{t\in \C\;\Big|
\prod_{i=0}^{m+1} (t-x_i)\ne 0\Big\}=\C-\{0,x_1,\dots,x_m,1\}
$$
for any fixed $x\in X$.  
Note that $T_x$ is the preimage of $x$ under
the projection 
$$\pr:\fX\ni (x,t)\mapsto x\in X.$$

\section{Twisted homology groups}
\label{sec:homology}
In this section, we prepare facts about twisted homology groups associated 
with the Euler type integral (\ref{eq:intrep}) for our study.

Throughout this paper, we assume the conditions (\ref{eq:cond-wa}) 
and (\ref{eq:non-int}) on $\a$. We put 
$$\l=(\l_0,\l_1,\dots,\l_{m},\l_{m+1},\l_{m+2}), \qquad 
\l_i=\exp(2\pi\sqrt{-1}\a_i) \quad (0\le i\le m+2).$$
Note that $\prod_{i=0}^{m+2} \l_i=1$. By regarding $\l_i$ as indeterminants, 
we have  the  rational function field 
$\C(\l)=\C(\l_0,\l_1,\dots,\l_m,\l_{m+1})$. 
Let $\cL^\a$ be a locally constant sheaf on $\fX$ defined by a multi-valued 
function 
$$u(t,x)=\prod_{i=0}^{m+1}(t-x_i)^{\a_i}=
t^{\a_0}(t-x_1)^{\a_1}\cdots (t-x_m)^{\a_m}(t-1)^{\a_{m+1}},$$
and $\cL_x^\a$ be that on $T_x$ defined by its restriction $u_x=u_x(t)$ 
to $T_x$.
For a fixed $x\in X$, we define a vector space 
$$\CC_k(u_x)=\Big\{\sum_\nu  w_\nu \cdot
(\tau_\nu,u_x|_{\tau_\nu})\mid w_\nu\in \C(\l)\Big\}$$
over $\C(\l)$, where the sum is finite, $\tau_\nu$ is a $k$-chain 
in $T_x$ and $u_x|_{\tau_\nu}$ is a branch of $u_x$ on $\tau_\nu$.
We define a twisted homology group as a quotient space 
\begin{equation}
\label{eq:t-H}
\tH{\a}
=\ker(\pa_u:\CC_1(u_x)\to \CC_2(u_x))/\pa_u(\CC_0(u_x)),
\end{equation}
where $\pa_u$ is a boundary operator defined by 
$$\pa_u(\tau_\nu,u_x|_{\tau_\nu})=(\pa\tau_\nu,
(u_x|_{\tau_\nu})|_{\pa\tau_\nu}).$$
Similarly we have a locally finite twisted homology group
\begin{equation}
\label{eq:lf-t-H}
\lftH{\a}
=\ker(\pa_u:\CC^{lf}_1(u_x)\to \CC^{lf}_2(u_x))/\pa_u(\CC^{lf}_0(u_x)),
\end{equation}
where $\CC^{lf}_k(u_x)$ is defined by extending finite sums to 
locally finite sums for $\CC_k(u_x)$.  

The dimensions of $\lftH{\a}$ and $\tH{\a}$ are equal to $-\chi(T_x)$ 
by \cite[Theorem 1]{C},  
where $\chi(T_x)$ is the Euler number of $T_x$. 
Thus
we have the following.
\begin{fact}
\label{fact:dim}
$$\dim\tH{\a}=\dim\lftH{\a}=m+1.$$
\end{fact}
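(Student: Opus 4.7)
The plan is to reduce the assertion to an Euler-characteristic calculation. Since \cite[Theorem 1]{C} already provides the identity $\dim \tH{\a} = \dim \lftH{\a} = -\chi(T_x)$ under a mild non-triviality hypothesis on the local system, all that remains is to compute $\chi(T_x)$ and to verify that the hypothesis applies under our weaker parameter condition.

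To compute $\chi(T_x)$, I would view $T_x = \C - \{0,x_1,\ldots,x_m,1\}$ as $\P^1$ with the $m+3$ points $0,x_1,\ldots,x_m,1,\infty$ removed. Additivity of the Euler characteristic then yields
\begin{equation*}
\chi(T_x) \;=\; \chi(\P^1) - (m+3) \;=\; 2 - (m+3) \;=\; -(m+1),
\end{equation*}
a count which is equivalently visible from the fact that $T_x$ deformation retracts onto a wedge of $m+2$ circles, of Euler characteristic $1-(m+2)$. Substituting into the cited formula produces the desired value $m+1$ for both $\dim \tH{\a}$ and $\dim \lftH{\a}$.

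The only genuinely delicate point, and the step I would single out as the main obstacle, is confirming that \cite[Theorem 1]{C} remains applicable under our weaker assumption (\ref{eq:non-int}) rather than the stronger (\ref{eq:old-condition}). The required input is that the rank-one local system $\cL_x^\a$ admit no non-zero global covariantly constant section, which, since $T_x$ is connected, is equivalent to the monodromy around at least one puncture being non-trivial. Our condition (\ref{eq:non-int}) forces at least one $\a_i\notin\Z$, so the corresponding eigenvalue $\l_i=\exp(2\pi\i\a_i)\neq 1$, and this check is automatic; the vanishing of top-degree (locally finite) homology is immediate from the non-compactness of $T_x$. Once these two hypotheses are secured, the dimension count follows at once from the Euler-characteristic calculation above.
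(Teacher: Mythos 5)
Your proof is correct and takes essentially the same route as the paper, which likewise invokes \cite[Theorem 1]{C} to identify both dimensions with $-\chi(T_x)$ and then reads off $\chi(T_x)=-(m+1)$ from $T_x=\C-\{0,x_1,\dots,x_m,1\}$. Your extra step of verifying that the non-triviality hypothesis of Cho's theorem survives the weakened condition (\ref{eq:non-int}) (using (\ref{eq:cond-wa}) to guarantee a puncture with local monodromy $\l_i\neq 1$) is a point the paper leaves implicit, and is a sensible addition.
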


We have a natural map 
\begin{equation}
\label{eq:natural-map}
\jmath_h^\a:\tH{\a}\to \lftH{\a}
\end{equation}
by regarding a finite sum as a locally finite sum.
This map is isomorphic under the condition (\ref{eq:old-condition}) and 
its inverse $\reg$ is called the regularization.
However, under our assumption (\ref{eq:non-int}), 
it does not hold in general.
\begin{lemma}
\label{lem:non-iso}
If there exists a parameter $\a_i\in \Z$ then  
the map $\jmath_h^\a$ is not isomorphic. 
Even in this case, we have an isomorphism
$$\reg:\im(\jmath_h^\a) \to \tH{\a}/\ker(\jmath_h^\a).$$ 
\end{lemma}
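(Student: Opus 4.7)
The plan is to handle the two parts separately. For Part~1, I aim to exhibit an explicit non-zero element of $\ker(\jmath_h^\a)$ whenever some $\a_i\in\Z$; for Part~2, I apply the first isomorphism theorem.

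The key observation for Part~1 is that when $\l_i=\exp(2\pi\sqrt{-1}\a_i)=1$ for some $i\in\{0,1,\dots,m+1\}$, a small counterclockwise loop $\sigma$ around $x_i$ lifts to a closed twisted 1-chain $(\sigma,u_x|_\sigma)$, because a branch of $u_x$ extends single-valuedly across $\sigma$. The same loop bounds the punctured disk $D\setminus\{x_i\}$, which is a \emph{locally finite} 2-chain in $T_x$; thus the class $[\sigma]\in\tH{\a}$ is sent to $0$ by $\jmath_h^\a$. If instead $\a_{m+2}\in\Z$, the same argument applies with a large circle around $\infty$, whose exterior $\{|t|>R\}$ in $\P^1$ serves as the locally finite bounding 2-chain.

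The main obstacle is verifying $[\sigma]\neq 0$ in $\tH{\a}$. I plan to reduce to the cellular model in which $T_x$ deformation retracts to a wedge of $m+2$ loops $\gamma_0,\dots,\gamma_{m+1}$; the resulting twisted cellular chain complex has no 2-cells, and its $C_1\to C_0$ boundary sends $\gamma_j\otimes v\mapsto(\l_j-1)v$, so the cycle $\gamma_i\otimes v$ (to which $[\sigma]$ reduces) is automatically non-trivial in $H_1$ as soon as $\l_i=1$. As a cross-check, I can use the intersection pairing: pairing $[\sigma]$ with a locally finite 1-cycle consisting of a segment from $x_i$ to a neighboring puncture yields $\pm 1$ from a single transverse intersection after matching branches. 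Combined with Fact~\ref{fact:dim}, which gives $\dim\tH{\a}=\dim\lftH{\a}=m+1$, the non-vanishing of $\ker(\jmath_h^\a)$ forces $\jmath_h^\a$ to fail to be an isomorphism.

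Part~2 is immediate: $\jmath_h^\a$ is a linear map between finite-dimensional $\C$-vector spaces, so the first isomorphism theorem yields a canonical isomorphism $\tH{\a}/\ker(\jmath_h^\a)\xrightarrow{\sim}\im(\jmath_h^\a)$, and $\reg$ is defined as its inverse. This extends the classical regularization map from the case $\ker(\jmath_h^\a)=0$.
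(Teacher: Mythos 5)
Your proposal is correct and follows essentially the same route as the paper: the small circle around $x_i$ (or the large circle around $\infty$) carries a single-valued branch, bounds the punctured annulus as a locally finite $2$-chain and hence dies in $\lftH{\a}$, yet is non-zero in $\tH{\a}$, and the second claim is the first isomorphism theorem. The only divergence is in how non-vanishing in $\tH{\a}$ is certified: the paper defers this to the intersection-matrix computation of Proposition \ref{prop:perfect}, while you give a self-contained cellular argument (no $2$-cells in the wedge-of-circles model) and keep the intersection pairing as a cross-check --- both are sound.
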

\begin{proof}
Suppose that $\a_i$ is an integer. 
Let $\odot_i$ be an annulus 
$$\{t\in T_x\mid 0<|t-x_i|\le \e\},$$ 
and $\circlearrowleft_i$ be 
its boundary, where  $\e$ is a small positive real number.
For the case $i=m+2$, they are regarded as
$$\odot_{m+2}=\{t\in T_x\mid |t|\ge1/\e\}, \quad 
\circlearrowleft_{m+2}=\{t\in T_x\mid |t|=1/\e\}.$$ 
Since $u_x(t)$ is single-valued on $\odot_i$, we can regard 
$(\odot_i,u_x(t)|_{\odot_i})$ as an element of $\CC^{lf}_2(u_x)$.
Thus its image 
$(\circlearrowleft_i,u_x(t)|_{\circlearrowleft_i})$
under $\pa_u$ is $0$ as an element of $\lftH{\a}$.
However, we cannot regard $(\odot_i,u_x(t)|_{\odot_i})$ 
as an element of $\CC_2(u_x)$, since 
$\odot_i$ does not admit an expression as a finite union of $2$-simplexes.
In fact, we will show that 
$(\circlearrowleft_i,u_x(t)|_{\circlearrowleft_i})$ is not $0$ 
as an element of $\tH{\a}$ in Proposition \ref{prop:perfect}. 
It is elementary that 
$\tH{\a}/\ker(\jmath_h^\a)$ is isomorphic to $\im(\jmath_h^\a)$.
\end{proof}

We give  bases of $\lftH{\a}$ and $\tH{-\a}$.
Put
\begin{equation}
\label{eq:card-int}
r=\#\{0\le i\le m+2 \mid \a_i\in \Z\},
\end{equation}
and suppose that 
\begin{equation}
\label{eq:non-int-2-parameters}
\a_{i_0},\a_{i_1},\dots,\a_{i_{r-1}}\in \Z
,\quad \a_{i_{r}},\dots, \a_{i_{m+1}},\a_{i_{m+2}}\notin \Z,
\end{equation}
and the corresponding points $x_i$ $(i=0,\dots,m+2)$ are aligned
\begin{equation}
\label{eq:x-align}
x_{i_0}<x_{i_1}<\cdots<x_{i_{m}}<x_{i_{m+1}}<x_{i_{m+2}},
\end{equation}
where 
$$\left\{\begin{array}{cccc} 
i_{m+2}=m+2,& x_{i_{m+2}}=\infty &\textrm{if}& \a_{m+2}\notin\Z,\\
i_0=m+2,& x_{i_0}=-\infty &\textrm{if}& \a_{m+2}\in\Z.\\
\end{array}\right.
$$
Let $\imath$ be an element of the symmetric group $\frak{S}_{m+3}$ satisfying 
\begin{equation}
\label{eq:permutation}
\imath(i_0)=0, \ \imath(i_1)=1,\ \dots,\ \imath(i_m)=m,\ 
\imath(i_{m+1})=m+1,\ \imath(i_{m+2})=m+2.
\end{equation}
Then it satisfies $\imath^{-1}(p)=i_p$ for  $0\le p\le m+2$.
We fix these $x_1,\dots,x_m$.
Suppose that the circle 
$$\circlearrowleft_i=\{t\in T_x\mid |t-x_i|=\e\} \quad  
(0\le i\le m+1),
$$
is positively oriented with terminal $\hat x_i=x_i+\sqrt{-1}\e$ 
for $0\le i\le m+1$, see Figure \ref{fig:cycles}, 
and 
$$\circlearrowleft_{m+2}=\{t\in T_x\mid |t|= 1/\e\}$$
is negatively oriented with terminal $\hat x_{m+2}=\sqrt{-1}/\e$.

\begin{figure}[htb]
\begin{center}
\includegraphics[width=9cm]{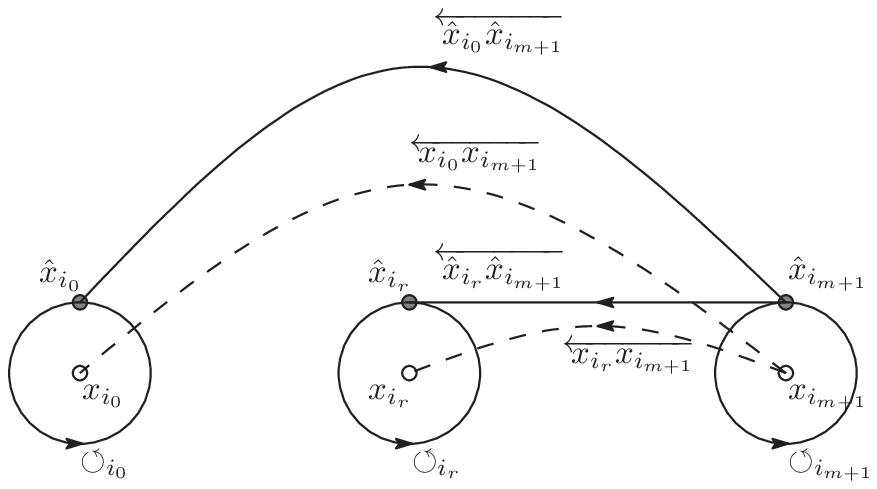}
\end{center}
\caption{Twisted cycles}
\label{fig:cycles}
\end{figure} 

We define twisted cycles by  
\begin{align}
\label{eq:lf-cycle}
\ell_k^\a&=(\overleftarrow{\;x_{i_k}{x_{i_{m+1}}}},u_x)\in \lftH{\a}
,\\
\label{eq:cycle}
\gamma_k^{-\a}&=
(\l^{-1}_{i_k}-1)(\overleftarrow{\; {\hat x_{i_k}}{\hat x_{i_{m+1}}}},u^{-1}_x)
-(\circlearrowleft_{i_{k}},u^{-1}_x)
+\frac{\l^{-1}_{i_{k}}-1}{\l^{-1}_{i_{m+1}}-1}
(\circlearrowleft_{i_{m+1}},u^{-1}_x)
\in \tH{-\a},
\end{align}
for $0\le k\le m$, where we take and fix a branch $u_x$ of $u_x(t)$ 
on the upper half space $\H\subset T_x$, and  
$\overleftarrow{\;x_{i_k}{x_{i_{m+1}}}}$ and  
$\overleftarrow{\; {\hat x_{i_k}}{\hat x_{i_{m+1}}}}$ are 
an oriented arc in $\H$ from $x_{i_{m+1}}$ to $x_{i_k}$ 
and that from $\hat x_{i_{m+1}}$  to 
$\hat x_{i_k}$, respectively, see Figure \ref{fig:cycles}.
Note that they are well-defined under 
the assumption (\ref{eq:non-int-2-parameters})
and that 
$$
\gamma_k^{-\a}=(\circlearrowleft_{i_{k}},u^{-1}_x) \quad (0\le k\le r-1)
$$
by $1-\l^{-1}_{i_k}=0$ for $0\le k\le r-1$.

\begin{definition}[\cite{AoKi},\cite{Y}]
\label{def:intersection}
The intersection form between 
$\lftH{\a}$ and $\tH{-\a}$ is defined by
\begin{equation}
\label{eq:intersection}
\la \ell^\a,\g^{-\a}\ra 
=\sum_{\mu,\nu}\sum_{t^\sigma_\tau\in \sigma_\mu\cap \tau_{\nu}}
(z_\mu\cdot w_{\nu})\cdot [\sigma_\mu,\tau_\nu]_{t^\mu_\nu}
\cdot 
(u_x(t^\mu_\nu)|_{\sigma_\mu}\cdot u_x^{-1}(t^\mu_\nu)|_{\tau_{\nu}})
\in \C(\l)
,
\end{equation}
where 
$$
\ell^\a=\sum_{\mu}  z_{\mu}\cdot (\sigma_{\mu},u_x|_{\sigma_{\mu}})
\in \lftH{\a},\quad 
\g^{-\a}=\sum_{\nu}  w_{\nu} \cdot (\tau_{\nu},u_x^{-1}|_{\tau_{\nu}})
\in \tH{-\a},$$
the formal sum for $\mu$ is locally finite, 
the formal sum for $\nu$ is finite, and 
$1$-chains $\sigma_\mu$ and $\tau_\nu$ intersect transversally at most one 
point $t^\mu_\nu$  with the topological intersection number 
$[\sigma_\mu, \tau_{\nu}]_{t^\mu_\nu}=\pm1$. 
\end{definition}

\begin{proposition}
\label{prop:perfect}
Under the condition (\ref{eq:non-int}), the intersection form
$\la\;, \;\ra$ is perfect, and  
the twisted cycles $\ell_0^\a,\ell_1^\a,\dots,\ell_m^\a$ and 
$\g_0^{-\a},\g_1^{-\a},\dots,\g_m^{-\a}$ are bases of 
$\lftH{\a}$ and $\tH{-\a}$, respectively.
\end{proposition}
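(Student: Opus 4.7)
The plan is to reduce the proposition to the non-singularity of the intersection matrix $M=(\la \ell_j^\a,\g_k^{-\a}\ra)_{0\le j,k\le m}$ over $\C(\l)$. By Fact \ref{fact:dim}, both $\lftH{\a}$ and $\tH{-\a}$ have dimension $m+1$, so the given $m+1$ cycles form bases precisely when they are linearly independent, and for a bilinear pairing between equidimensional finite-dimensional spaces non-degeneracy is the same as being perfect. Hence if $\det M\ne 0$ in $\C(\l)$, the cycles are automatically linearly independent (any linear relation would annihilate a whole row or column of $M$) and $\la\;,\;\ra$ is perfect at the same time, giving both assertions at once.

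Next I would evaluate $M$ directly from Definition \ref{def:intersection}. The essential geometric input is that $\ell_j^\a$ is supported on the real segment joining $x_{i_{m+1}}$ to $x_{i_j}$, while $\g_k^{-\a}$ is built from an arc $\overleftarrow{\;\hat x_{i_k}\hat x_{i_{m+1}}}$ in the upper half-plane $\H$ together with the two small circles $\circlearrowleft_{i_k}$ and $\circlearrowleft_{i_{m+1}}$. The $\H$-arc is disjoint from the real line, so all contributions come from where these two circles cross $\ell_j^\a$ near their centers. I would split the computation into the case $k<r$, where $\l_{i_k}^{-1}-1=0$ and $\g_k^{-\a}$ reduces to $\pm\circlearrowleft_{i_k}$, and the case $k\ge r$, where the full regularized form is used; these require different local treatments.

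With the alignment (\ref{eq:x-align}) in force I expect that a small circle $\circlearrowleft_{i_k}$ contributes to $\la\ell_j^\a,\g_k^{-\a}\ra$ only when $i_k$ is an endpoint of $\ell_j^\a$, giving $M$ a (block-)triangular shape. The diagonal entries should be explicit non-vanishing rational functions of the $\l_i$: for $j\ge r$ the standard regularized one-cycle computation yields a factor proportional to $(1-\l_{i_j}\l_{i_{m+1}})/\bigl((1-\l_{i_j})(1-\l_{i_{m+1}})\bigr)$, while for $j<r$ the pure-circle case yields a simple non-zero local factor in $\l_{i_{m+1}}$. Under the non-integral condition (\ref{eq:non-int}) every such diagonal factor is non-zero in $\C(\l)$, so $\det M\ne 0$ and the reduction above finishes the proof. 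As a byproduct, this verifies the non-vanishing of $(\circlearrowleft_{i_k},u_x^{-1})$ in $\tH{-\a}$ promised at the end of the proof of Lemma \ref{lem:non-iso}.

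The main obstacle will be the careful bookkeeping of the branches of $u_x$ and $u_x^{-1}$ at each transverse intersection on the small circles: crossing $\circlearrowleft_{i_k}$ from one side of $x_{i_k}$ to the other multiplies the branch by a monodromy factor $\l_{i_k}^{\pm 1}$, and these factors must be reconciled with the base branch fixed on $\H$. A closely related subtlety is that $\circlearrowleft_{i_{m+1}}$ appears in every $\g_k^{-\a}$ with $k\ge r$, so a priori it contributes to every row of $M$; showing that the weight $(\l_{i_k}^{-1}-1)/(\l_{i_{m+1}}^{-1}-1)$ built into $\g_k^{-\a}$, together with the relation $\prod_i\l_i=1$, collapses these extra contributions into the triangular pattern will be the heart of the calculation.
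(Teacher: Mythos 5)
Your overall strategy --- reduce both assertions to the non-vanishing of $\det\big(\la \ell_j^\a,\g_k^{-\a}\ra\big)_{0\le j,k\le m}$ and compute that matrix from Definition \ref{def:intersection} --- is exactly the paper's. But the shape you predict for the matrix is wrong, and the issue you flag as "the heart of the calculation" does not resolve the way you hope. Every arc $\overleftarrow{\;x_{i_j}x_{i_{m+1}}}$ has $x_{i_{m+1}}$ as an endpoint and therefore crosses $\circlearrowleft_{i_{m+1}}$, which occurs in every $\g_k^{-\a}$ with $k\ge r$ with weight $(\l_{i_k}^{-1}-1)/(\l_{i_{m+1}}^{-1}-1)$; these contributions do \emph{not} collapse into a triangular pattern. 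The paper's matrix (\ref{eq:int-mat}) is
$$
H=I_{m+1}+\frac{1}{\l_{i_{m+1}}-1}\,A\,
\diag\Big(0,\dots,0,\tfrac{\l_{i_r}-1}{\l_{i_r}},\dots,\tfrac{\l_{i_m}-1}{\l_{i_m}}\Big),
$$
where $A$ has $1$ on and above the diagonal and $\l_{i_{m+1}}$ below it; so $H$ is dense in its last $m+1-r$ columns (only the first $r$ columns reduce to standard basis vectors), and the determinant must actually be computed, e.g.\ by column elimination, yielding $\det H=(1-\l_{i_{m+2}})/(1-\l_{i_{m+1}}^{-1})$.

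The second, more serious gap is that even granting triangularity your argument would fail: the diagonal entry for $j\ge r$ is $\dfrac{\l_{i_j}\l_{i_{m+1}}-1}{\l_{i_j}(\l_{i_{m+1}}-1)}$, which vanishes exactly when $\a_{i_j}+\a_{i_{m+1}}\in\Z$ --- a situation the weakened hypothesis (\ref{eq:non-int}) explicitly permits (this is Cases 3 and 4 of Theorem \ref{th:main}, and the example in \S\ref{sec:example} with $\a_2+\a_3\in\Z$ lives there). So the claim that "under (\ref{eq:non-int}) every such diagonal factor is non-zero" is false, and a product-of-diagonal-entries argument cannot close the proof. In the actual computation these potentially vanishing factors cancel telescopically, leaving only $1-\l_{i_{m+2}}$ over $1-\l_{i_{m+1}}^{-1}$; non-vanishing then rests entirely on the ordering convention (\ref{eq:non-int-2-parameters}), which places two non-integral parameters at the positions $i_{m+1}$ and $i_{m+2}$ --- that is the one and only place the hypothesis (\ref{eq:non-int}) enters. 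You need to carry out the full determinant computation rather than appeal to triangularity.
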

\begin{proof}
We compute the intersection matrix 
$H=\big(\la \ell_k^\a,\g_{k'}^{-\a}\ra\big)_{0\le k,k'\le m}$. 
The locally finite chain $\overleftarrow{\;x_{i_0}x_{i_{m+1}}}$ and 
finite chains consisting of $\g_0^{-\a}$ intersect at two points 
$t_{0}^{0}$ and $t_{m+1}^{0}$ on $\circlearrowleft_{i_{0}}$ and 
on $\circlearrowleft_{i_{m}}$, respectively.
The topological intersection numbers are 
$$[\overleftarrow{\;x_{i_0}x_{i_{m+1}}}, \circlearrowleft_{i_{0}}]_{t_{0}^{0}}=
-1,\quad 
[\overleftarrow{\;x_{i_0}x_{i_{m+1}}},\circlearrowleft_{i_{m+1}}]_{t_{m+1}^{0}}
=1,$$
and the products of branches of $u_x(t)$ and $1/u_x(t)$ at 
$t_{0}^{0}$ and $t_{m+1}^{0}$ are 
$$
u_x(t_{0}^{0})\cdot \frac{1}{u_x(t_{0}^{0})}=\l_{i_0}^{-1},\quad 
u_x(t_{m+1}^{0})\cdot \frac{1}{u_x(t_{m+1}^{0})}=1.
$$
By considering coefficients, we have 
$$
\la \ell_0^\a,\g_0^{-\a}\ra=
\l_{i_0}^{-1}
+\frac{\l_{i_{0}}^{-1}-1}{\l_{i_{m+1}}^{-1}-1}
= \frac{\l_{i_0}\l_{i_{m+1}}-1}{\l_{i_0}(\l_{i_{m+1}}-1)}
=1+\frac{\l_{i_0}-1}{\l_{i_0}(\l_{i_{m+1}}-1)}.
$$
Similarly, we have 
$$
\la \ell_0^\a,\g_k^{-\a}\ra=\frac{\l_{i_k}-1}{\l_{i_k}(\l_{i_{m+1}}-1)},
\quad 
\la \ell_k^\a,\g_0^{-\a}\ra=\frac{(\l_{i_0}-1)\l_{i_{m+1}}}
{\l_{i_0}(\l_{i_{m+1}}-1)}
$$
for $1\le k\le m$.
Hence we have 
\begin{equation}
\label{eq:int-mat}
H=I_{m+1}+
\frac{1}{\l_{i_{m+1}}-1}
\begin{pmatrix} 
1& 1& \cdots & 1\\
\l_{i_{m+1}}& 1 & \cdots &1 \\
\vdots &\ddots &\ddots &\vdots \\
\l_{i_{m+1}}& \cdots & \l_{i_{m+1}} &1 
 \end{pmatrix}
\diag\big(
\overbrace{0,\dots,0}^r,
\frac{\l_{i_r}-1}{\l_{i_r}},
\dots,
\frac{\l_{i_m}-1}{\l_{i_m}}
\big),
\end{equation}
where $I_{m+1}$ is the unit matrix of size $m+1$, and 
$\diag(z_0,\dots,z_m)$ denotes the diagonal matrix with diagonal entries 
$z_0,\dots,z_m$. 
Since its determinant is 
$$\frac{1-\l_{i_{m+2}}}{1-\l_{i_{m+1}}^{-1}}\ne0,$$ 
$\ell_0^{\a},\dots,\ell_m^{\a}$ and $\g_{0}^{-\a},\dots,\g_{m}^{-\a}$ 
are  bases of $\lftH{\a}$ and $\tH{-\a}$, and the 
intersection form is perfect.
\end{proof}

\begin{remark}
\label{rem:extended-cycles}
We extend the notations in (\ref{eq:lf-cycle}) and (\ref{eq:cycle}) 
to $k=m+1,m+2$. 
It is obvious that $\ell^\a_{m+1}=0$, $\g^{-\a}_{m+1}=0$.
Since 
$$\la \ell^\a_{m+2},\g^{-\a}_{k}\ra=\frac{\l_{i_k}^{-1}-1}
{\l_{i_{m+1}}^{-1}-1}\cdot \l_{i_{m+1}}^{-1},\quad 
\la \ell^\a_{k},\g^{-\a}_{m+1}\ra=\frac{\l_{i_{m+2}}^{-1}-1}
{\l_{i_{m+1}}^{-1}-1},
$$
the cycles $\ell^\a_{m+2}$ and $\g^{-\a}_{m+2}$ are expressed 
as linear combinations
\begin{align}
\nonumber
&\ell^\a_{m+2}=\ex_{m+2} \cdot
\begin{pmatrix}
\ell^\a_0\\
\ell^\a_1\\
\vdots\\
\ell^\a_m
\end{pmatrix}, 
\quad \g^{-\a}_{m+2}=
\big(\g^{-\a}_{0},\g^{-\a}_{1},\dots,\g^{-\a}_{m}\big)\cdot 
\ex_{m+2}^*,\\
\label{eq:e(m+2)}
&\ex_{m+2}
=\frac{-\l_{i_{m+2}}}{\l_{i_{m+2}}-1}
\Big((\l_{i_0}-1), \l_{i_0}(\l_{i_1}-1),
\cdots,
\l_{i_0}\l_{i_1}\cdots \l_{i_{m-1}}(\l_{i_m}-1)\Big)
,\\
\label{eq:e*(m+2)}
&\ex_{m+2}^*=\left(
\begin{array}{r}
\l_{i_0}\l_{i_1}\cdots \l_{i_{m}}\\
\l_{i_1}\cdots \l_{i_{m}}\\
\vdots\quad\\
\l_{i_{m}}
\end{array}\right).
\end{align}
Their intersection number is 
$$
\la \ell^\a_{m+2},\g^{-\a}_{m+2}\ra =
\frac{\l_{i_{m+1}}\l_{i_{m+2}}-1}
{(\l_{i_{m+1}}-1)\l_{i_{m+2}}}
=
1+\frac{\l_{i_{m+2}}-1}
{(\l_{i_{m+1}}-1)\l_{i_{m+2}}}
.
$$
\end{remark}

\section{Local systems}
\label{sec:loc-system}
We take a base point $\dot x\in X$ so that 
$$(\dot x_0,\dot x_1,\dots,\dot x_m,\dot x_{m+1}, \dot x_{m+2}) 
= (0,\dot x_1,\dots,\dot x_m,1, \infty) 
$$
are aligned as in 
(\ref{eq:x-align}) for a fixed parameter $\a$ satisfying 
(\ref{eq:non-int-2-parameters}).
We set 
$$\C_p(\dot x)=\{
(\dot x_1,\dots, \dot x_{p-1},x_p, \dot x_{p+1},\dots, \dot x_m)
\mid x_p\in \C\}\quad (1\le p\le m),$$
which are lines in $\C^m$ passing through $\dot x$. 
For distinct indexes $1\le p\le  m$ and $0\le q\le m+1$, 
let  $\rho_{pq}$ be a loop 
in $X\cap \C_p(\dot x)$ starting from $x_p=\dot x_p$, approaching to 
$\dot x_q$ via the upper half space in $\C_p(\dot x)$, 
turning $\dot x_q$ once positively, and tracing back to $\dot x_p$.

\begin{fact}
\label{fact:pi1}
The fundamental group $\pi_1(X,\dot x)$ is generated by the loops 
$\rho_{pq}$, 
where $0\le p<q\le m+1$, $(p,q)\ne(0,m+1)$, and 
$\rho_{0p}$ is regarded as the loop $\rho_{p0}$ in $X\cap \C_p(\dot x)$.
\end{fact}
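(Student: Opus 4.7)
The plan is to argue by induction on $m$ using the Fadell--Neuwirth fibration obtained by forgetting the last coordinate. For the base case $m=1$, $X=\C\setminus\{0,1\}$ is a twice-punctured plane whose fundamental group is freely generated by $\rho_{10}$ and $\rho_{12}$, matching the claim.

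For the inductive step, I would consider the projection $\pi\colon X\to X'$, $(x_1,\dots,x_m)\mapsto(x_1,\dots,x_{m-1})$, where $X'$ is the $(m-1)$-variable analogue of $X$. This is a locally trivial fibration over $X'$ with fiber over $\dot x'=(\dot x_1,\dots,\dot x_{m-1})$ equal to $T_{\dot x'}=\C\setminus\{0,1,\dot x_1,\dots,\dot x_{m-1}\}$, an $(m+1)$-punctured plane. Since iterated Fadell--Neuwirth fibrations render each such space aspherical (a fiber-type hyperplane arrangement in the sense of Falk--Randell), one has $\pi_2(X')=0$, so the homotopy long exact sequence collapses to
$$1\to\pi_1(T_{\dot x'},\dot x_m)\to\pi_1(X,\dot x)\to\pi_1(X',\dot x')\to 1.$$

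Next I would identify the generators on each side of this sequence with loops of the form $\rho_{pq}$. The fiber's fundamental group is free of rank $m+1$, generated by positively oriented small loops around the $m+1$ punctures $0,\dot x_1,\dots,\dot x_{m-1},1$; these are precisely the loops $\rho_{m0},\rho_{m1},\dots,\rho_{m,m-1},\rho_{m,m+1}$ in which only $x_m$ moves. By the inductive hypothesis, $\pi_1(X',\dot x')$ is generated by $\rho_{pq}$ with $0\le p<q\le m$ and $(p,q)\ne(0,m)$ in the $(m-1)$-variable indexing; these correspond to the $\rho_{pq}$ of the statement for which neither index equals $m$, once one takes into account the shift of the index of the point $1$ from $m$ in $X'$ to $m+1$ in $X$. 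Each such loop lifts canonically to $X$ by holding $x_m$ fixed at $\dot x_m$, giving the same loop $\rho_{pq}$ defined via the line $\C_p(\dot x)$. Combining the $m+1$ fiber generators with the lifted base generators yields precisely the generating set in the statement, and the short exact sequence shows these generate $\pi_1(X,\dot x)$.

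The main obstacle I anticipate is the bookkeeping: one must keep track of the shifting index of the fixed point $1$ between $X'$ and $X$, verify that the excluded pair $(0,m)$ of the inductive hypothesis transforms correctly into the excluded pair $(0,m+1)$ of the statement, and confirm that the loops defined in the paper (each supported on $\C_p(\dot x)$) are indeed the natural lifts of the corresponding loops in $X'$. The absence of generators $\rho_{p,m+2}$ around $\infty$ is consistent with this picture, since on the $(m+1)$-punctured fiber the loop around $\infty$ is a product of the loops around the finite punctures, and the analogous relation propagates up through the fibration.
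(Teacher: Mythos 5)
The paper does not prove this statement: it is quoted as a ``Fact'' (the interpretation of $\pi_1(X,\dot x)$ via the pure braid group alluded to in the introduction), so there is no in-text proof to compare yours against. Your route --- induction on $m$ via the Fadell--Neuwirth fibration $X\to X'$ forgetting $x_m$, with the $(m+1)$-punctured plane as fiber --- is the standard way to establish it, and the skeleton is sound. One small economy: for a generation statement you only need right exactness of $\pi_1(F)\to\pi_1(X)\to\pi_1(X')\to 1$, which holds for any fibration with path-connected fiber; the appeal to asphericity of $X'$ and the full short exact sequence is needed only if you also want a presentation, not merely generators.

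There is, however, one genuine gap in your matching of generators. The fiber contributes the loops in which $x_m$ moves, namely $\rho_{m0},\rho_{m1},\dots,\rho_{m,m-1},\rho_{m,m+1}$, whereas the generating set of the statement assigns to the pair $(j,m)$ with $1\le j\le m-1$ the loop $\rho_{jm}$, which by the paper's definition is supported on $\C_j(\dot x)$: it is $x_j$ that encircles $\dot x_m$, not $x_m$ that encircles $\dot x_j$. These are different loops, so the claim that the union is ``precisely the generating set in the statement'' silently identifies $\rho_{mj}$ with $\rho_{jm}$. With the paper's conventions (approach through the upper half plane, one positive turn) the identification is in fact correct --- it is the classical fact that the pure braid generator $A_{ij}$ admits both geometric descriptions, the first nontrivial instance being the identity $\sigma_1\sigma_2^2\sigma_1^{-1}=\sigma_2^{-1}\sigma_1^2\sigma_2$ in $B_3$ --- but this is exactly the point where an inconsistent over/under convention yields only conjugate, not equal, elements, so it must be stated and proved (or one must argue that replacing each $\rho_{jm}$ by $\rho_{mj}$, a conjugate by a word in the remaining generators, does not change the generated subgroup). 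The rest of the bookkeeping you flag --- the index shift of the point $1$, the excluded pair $(0,m+1)$, and the absence of loops around $\infty$ --- is handled correctly.
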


By the local triviality of the spaces 
$\lftH{\a}$,  $\tH{\a}$,  $\tH{-\a}$,  $\lftH{-\a}$,  
we have the local systems 
$$
\lfLS{\a}=\bigcup_{x\in X}\lftH{\a}, \quad 
\LS{\a}=\bigcup_{x\in X}\tH{\a}, 
$$

$$
\LS{-\a}=\bigcup_{x\in X}\tH{-\a}, \quad 
\lfLS{-\a}=\bigcup_{x\in X}\lftH{-\a}, 
$$
over $X$. 

\begin{proposition}
\label{prop:stable}
\begin{enumerate}
\item 
\label{item:commute}  
The natural map 
$\jmath_h^\a:\tH{\a}\to \lftH{\a}$
commutes with horizontal deformations  in $\lfLS{\a}$.
\item 
\label{item:stable}  
The intersection form $\la \;,\; \ra$ between 
$\lftH{\a}$ and  $\tH{-\a}$ is stable under horizontal deformations  
in $\lfLS{\a}$ and $\LS{-\a}$.
\end{enumerate}
\end{proposition}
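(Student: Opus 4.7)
The plan is to realize the parallel transport in each local system $\lfLS{\pm\a}$ and $\LS{\pm\a}$ by a concrete isotopy of fibers of the projection $\pr:\fX\to X$. Given a short path $x(s)$ in $X$ based at $x_0$, local triviality of $\pr$ yields a continuous family of homeomorphisms $\phi_s:T_{x_0}\to T_{x(s)}$ with $\phi_0=\mathrm{id}$, which lifts to an isomorphism of the restricted sheaves $\cL^{\pm\a}$. At the chain level, $\phi_s$ sends a finite (resp.\ locally finite) $k$-chain to a finite (resp.\ locally finite) $k$-chain, commutes with the twisted boundary $\pa_u$, and transports a chosen branch of $u_{x_0}$ along the path; this realizes the parallel transport in $\tH{\pm\a}$ and $\lftH{\pm\a}$. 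Because $\phi_s$ is a chain map, the descent from chains to homology classes is automatic in everything that follows.

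For part (\ref{item:commute}), the naturality of $\jmath_h^\a$ with respect to these transports is then immediate: on a representative $\sum_\mu z_\mu(\sigma_\mu,u_{x_0}|_{\sigma_\mu})\in\tH{\a}$, both routes of the commutative square produce the same expression $\sum_\mu z_\mu(\phi_s(\sigma_\mu),u_{x(s)}|_{\phi_s(\sigma_\mu)})$, viewed either first as a finite chain and then as a locally finite one, or in the reverse order.

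For part (\ref{item:stable}), I choose representatives $\ell=\sum_\mu z_\mu(\sigma_\mu,u_{x_0}|_{\sigma_\mu})\in\lftH{\a}$ and $\g=\sum_\nu w_\nu(\tau_\nu,u_{x_0}^{-1}|_{\tau_\nu})\in\tH{-\a}$ whose $1$-chains are in transversal position. Since each $\tau_\nu$ has compact support and the sum over $\nu$ is finite, only finitely many $\sigma_\mu$ can meet the $\tau_\nu$, so the total set of intersection points is finite. For $s$ small, transversality persists, each intersection point deforms continuously as $t^\mu_\nu\mapsto\phi_s(t^\mu_\nu)\in\phi_s(\sigma_\mu)\cap\phi_s(\tau_\nu)$, and no intersection points are created or destroyed. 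The topological intersection numbers $[\sigma_\mu,\tau_\nu]_{t^\mu_\nu}=\pm1$ are locally constant integers, hence invariant. The branch factor $u_x(t)|_{\sigma}\cdot u_x^{-1}(t)|_{\tau}$ is the value of a section of $\cL^\a\otimes\cL^{-\a}$, a trivial local system on $\fX$, and is therefore preserved along the continuous family $(x(s),\phi_s(t^\mu_\nu))$. Summing over the finite intersection set and over $\mu,\nu$ yields the equality of $\la\ell,\g\ra$ before and after the deformation.

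The one delicate point, which is also where the finiteness of $\g$ is used essentially, is the uniform control of the intersection set under the deformation. Compactness of $\mathrm{supp}(\g)$ together with continuity of $(s,t)\mapsto\phi_s(t)$ confines all relevant intersections to a fixed compact subset, preventing the finite sum defining $\la\ell,\g\ra$ from degenerating as $s$ varies. Everything else is a formal consequence of the fact that $\phi_s$ is a chain map compatible with the branch structure.
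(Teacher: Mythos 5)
Your proof is correct, but for part (\ref{item:stable}) it takes a genuinely different route from the paper. The paper disposes of (\ref{item:commute}) exactly as you do (it is immediate from the definition of $\jmath_h^\a$), but for (\ref{item:stable}) it simply observes that the intersection matrix $H$ of (\ref{eq:int-mat}), computed with respect to the horizontal frames $\ell_0^\a,\dots,\ell_m^\a$ and $\g_0^{-\a},\dots,\g_m^{-\a}$, is given by a formula in the $\l_i$ alone and hence is independent of $x\in X$; flatness of the pairing is then read off from the constancy of its matrix in a horizontal frame. You instead prove flatness directly and basis-freely: you realize parallel transport by an ambient isotopy $\phi_s:T_{x_0}\to T_{x(s)}$ coming from the local triviality of $\pr:\fX\to X$, note that a single orientation-preserving homeomorphism applied to both chains carries $\sigma_\mu\cap\tau_\nu$ bijectively onto $\phi_s(\sigma_\mu)\cap\phi_s(\tau_\nu)$ (so your worry about intersection points being created or destroyed is in fact vacuous in this setup), and observe that the branch factor is a flat section of the trivial system $\cL^\a\otimes\cL^{-\a}$ over $\fX$; compactness of $\mathrm{supp}(\g^{-\a})$ plus local finiteness of $\{\sigma_\mu\}$ keeps the sum finite throughout. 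What your argument buys is independence from the explicit computation in Proposition \ref{prop:perfect} and from any choice of basis or alignment (\ref{eq:x-align}); what the paper's argument buys is brevity, since the matrix $H$ has already been computed. Both are sound; yours is the more self-contained justification of why the form is flat, and it also cleanly isolates the one place where finiteness of the second argument is essential.
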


\begin{proof}
(\ref{item:commute}) 
It is obvious by the definition of $\jmath_h^\a$.

\noindent(\ref{item:stable})
Note that the intersection matrix $H$ 
in (\ref{eq:int-mat}) 
is independent of $x\in X$. 
\end{proof}


\section{Monodromy representations}
\label{sec:monod-rep}

Let $U$ be a small simply connected neighborhood of $\dot x$ 
contained in $X$.  We set four trivial vector bundles 
$$
\begin{array}{ll}
\lfSV{\a}=\bigcup\limits_{x\in U} \lftH{\a}\subset \lfLS{\a}, &
\SV{\a}=\bigcup\limits_{x\in U} \tH{\a}\subset \LS{\a},\\
\SV{-\a}=\bigcup\limits_{x\in U} \tH{-\a}\subset \LS{-\a},&
\lfSV{-\a}=\bigcup\limits_{x\in U} \lftH{-\a}\subset \lfLS{-\a}.
\end{array}
$$
We identify sections of $\lfSV{\a}$, $\SV{\a}$, 
$\SV{-\a}$ and $\lfSV{-\a}$ with  elements of  
$\lftH{\a}$,  $\tH{\a}$,  $\tH{-\a}$ and  $\lftH{-\a}$ for a fixed 
element $x\in U$.

A loop $\rho$ with terminal $\dot x$ in $X$
induces $\C$-linear isomorphisms
\begin{align*}
\cM^\a_\rho&:\lfSV\a \ni \ell^\a \mapsto \rho_*\ell^\a \in \lfSV\a,\\
\cN^\a_\rho&:\SV\a \ni \g^\a \mapsto \rho_*\g^\a \in \SV\a,\\
\cN^{-\a}_\rho&:\SV{-\a} \ni \g^{-\a} \mapsto \rho_*\g^{-\a} \in \SV{-\a},\\
\cM^{-\a}_\rho&:\lfSV{-\a} \ni \ell^{-\a} \mapsto 
\rho_*\ell^{-\a} \in \lfSV{-\a},
\end{align*}
where 
$\ell^\a\in \lftH{\a}$, $\g^\a \in \tH{\a}$,  $\g^{-\a}\in \tH{-\a}$ and 
$\ell^{-\a}\in \lftH{-\a}$ are regarded as sections of  
$\lfSV\a$, $\SV\a$, $\SV{-\a}$ and $\lfSV{-\a}$, 
and $\rho_*\ell^\a$, $\rho_*\g^\a$  $\rho_*\g^{-\a}$, $\rho_*\ell^{-\a}$
are their continuations along the loop $\rho$.
They are called circuit transformations along $\rho$.
The map $\rho \mapsto \cM^\a_\rho$
induces a homomorphism 
$$\cM^\a:\pi_1(X,\dot x)\ni \rho \mapsto \cM^\a_\rho\in GL( \lfSV\a),$$ 
which is called the monodromy representation.  
Similarly, we have the monodromy representations   
\begin{align*}
\cN^\a&:\pi_1(X,\dot x) \to GL(\SV\a),\\
\cN^{-\a}&:\pi_1(X,\dot x) \to GL(\SV{-\a}),\\
\cM^{-\a}&:\pi_1(X,\dot x) \to GL(\lfSV{-\a}).
\end{align*}

\begin{proposition}
\label{prop:reduce}
Suppose that the condition (\ref{eq:non-int}). If there exists a 
parameter $\a_i\in\Z$ then the monodromy representations 
$\cM^\a$, $\cN^\a$, $\cN^{-\a}$ and $\cM^{-\a}$ are reducible.
Their proper invariant subspaces are $\mathrm{im}(\jmath_h^\a)$, 
$\ker(\jmath_h^\a)$, $\ker(\jmath_h^{-\a})$ and  
$\mathrm{im}(\jmath_h^{-\a})$, respectively,
where the natural map $\jmath_h^\a$ is given in (\ref{eq:natural-map}).
\end{proposition}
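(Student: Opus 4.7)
The plan is to combine the commutativity of $\jmath_h^\a$ with parallel transport (Proposition \ref{prop:stable}(\ref{item:commute})) with its failure to be an isomorphism (Lemma \ref{lem:non-iso}), and then to verify non-triviality by constructing an explicit finite cycle whose image is nonzero.

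First, for any loop $\rho\in\pi_1(X,\dot x)$, Proposition \ref{prop:stable}(\ref{item:commute}) yields
$$\cM^\a_\rho\circ \jmath_h^\a = \jmath_h^\a\circ \cN^\a_\rho,$$
so $\ker(\jmath_h^\a)$ is $\cN^\a_\rho$-invariant and $\mathrm{im}(\jmath_h^\a)$ is $\cM^\a_\rho$-invariant. Letting $\rho$ range over $\pi_1(X,\dot x)$ gives the invariance under the full representations $\cN^\a$ and $\cM^\a$. Since the hypothesis ``$\a_i\in\Z$ for some $i$'' is stable under $\a\mapsto -\a$, the same reasoning applied with $-\a$ furnishes the analogous invariances of $\ker(\jmath_h^{-\a})$ and $\mathrm{im}(\jmath_h^{-\a})$ under $\cN^{-\a}$ and $\cM^{-\a}$.

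Next, to check that these four subspaces are proper, I would invoke Fact \ref{fact:dim} that $\dim\tH{\a}=\dim\lftH{\a}=m+1$ together with Lemma \ref{lem:non-iso}: non-invertibility of $\jmath_h^\a$ immediately forces $\ker(\jmath_h^\a)\ne 0$ and $\mathrm{im}(\jmath_h^\a)\ne\lftH{\a}$. It then remains to exclude $\jmath_h^\a\equiv 0$ by exhibiting a finite cycle $\gamma\in\tH{\a}$ with $\jmath_h^\a(\gamma)\ne 0$. By (\ref{eq:non-int}) together with (\ref{eq:cond-wa}), the entries $\a_{i_{m+1}}$ and $\a_{i_{m+2}}$ are non-integers, and so the standard regularization of the arc $\overleftarrow{\,x_{i_{m+1}}x_{i_{m+2}}}$---obtained by attaching small circles around both endpoints with appropriate $\C(\l)$-coefficients, exactly as in formula (\ref{eq:cycle}) with $-\a$ replaced by $\a$---defines such a $\gamma$. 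Its image $\jmath_h^\a(\gamma)\in\lftH{\a}$ pairs non-trivially under the perfect intersection form of Proposition \ref{prop:perfect} with a suitable basis cycle $\g_k^{-\a}\in\tH{-\a}$, hence is nonzero, yielding $\mathrm{im}(\jmath_h^\a)\ne 0$ and $\ker(\jmath_h^\a)\ne\tH{\a}$; the $-\a$ case is symmetric.

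The main obstacle is the intersection computation in the last step: one must verify that the pairing $\la \jmath_h^\a(\gamma),\g_k^{-\a}\ra$ is a nonzero element of $\C(\l)$. This reduces to a topological intersection analogous to the computation of the matrix $H$ in (\ref{eq:int-mat}); because $\a_{i_{m+1}}\notin\Z$, the relevant denominator $\l_{i_{m+1}}-1$ does not vanish, so the pairing is well-defined and generically nontrivial, which completes the plan.
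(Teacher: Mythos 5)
Your invariance argument is exactly the paper's: Proposition \ref{prop:stable}(\ref{item:commute}) gives $\cM^\a_\rho\circ\jmath_h^\a=\jmath_h^\a\circ\cN^\a_\rho$, hence $\mathrm{im}(\jmath_h^\a)$ and $\ker(\jmath_h^\a)$ are invariant, and the $-\a$ cases follow by the sign change; the paper then simply cites Lemma \ref{lem:non-iso} for properness and stops. You go further by observing that one must also rule out $\jmath_h^\a\equiv 0$ (equivalently $\ker(\jmath_h^\a)\ne\tH{\a}$), which Lemma \ref{lem:non-iso} alone does not give --- a legitimate point the paper glosses over. But your witness does not do the job. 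Your $\gamma$ is (up to a nonzero scalar) the cycle $\g^\a_{m+2}$ of Remark \ref{rem:extended-cycles}, and $\jmath_h^\a(\gamma)$ is a nonzero multiple of $\ell^\a_{m+2}=\ex_{m+2}\cdot\tr(\ell^\a_0,\dots,\ell^\a_m)$; by (\ref{eq:e(m+2)}) every entry of $\ex_{m+2}$ carries a factor $\l_{i_j}-1$ with $0\le j\le m$. Hence in the case $r=m+1$ (all of $\a_{i_0},\dots,\a_{i_m}\in\Z$, e.g. $\a=(0,\dots,0,\tfrac12,-\tfrac12)$, which is allowed under (\ref{eq:non-int}) and (\ref{eq:cond-wa})) your cycle has image $0$ and pairs trivially with every $\g_k^{-\a}$; the phrase ``generically nontrivial'' is carrying the whole weight of the step and it fails on exactly this stratum.

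Worse, on that stratum no choice of witness exists: when $r=m+1$ the circles $(\circlearrowleft_{i_0},u_x),\dots,(\circlearrowleft_{i_m},u_x)$ form a basis of $\tH{\a}$ (Proposition \ref{prop:perfect} after the sign change), and each bounds the locally finite $2$-chain $\odot_{i_k}$ as in the proof of Lemma \ref{lem:non-iso}, so $\jmath_h^\a$ is the zero map, $\ker(\jmath_h^\a)=\tH{\a}$ is not proper, and $\mathrm{im}(\jmath_h^\a)=0$ witnesses no reducibility. So the non-vacuousness you are trying to prove actually requires $1\le r\le m$ --- a hypothesis neither you nor the paper's one-line proof makes explicit. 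In the range $1\le r\le m$ your plan does close correctly, but you should take the witness to be the regularized cycle $\g^\a_m$ attached to the arc $\overleftarrow{\;x_{i_m}x_{i_{m+1}}}$ (legitimate because $\a_{i_m}\notin\Z$ there): its image under $\jmath_h^\a$ is $(\l_{i_m}-1)\ell^\a_m$, a nonzero multiple of a basis element of $\lftH{\a}$ from (\ref{eq:lf-cycle}), so perfectness of $\la\;,\;\ra$ gives a nonzero pairing with no further computation.
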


\begin{proof}
By Lemma \ref{lem:non-iso}, the image $\mathrm{im}(\jmath_h^\a)$ and 
the kernel $\ker(\jmath_h^\a)$ of $\jmath_h^\a$ 
are proper subspaces.
By Proposition \ref{prop:stable} (\ref{item:commute}), 
they are invariant under any circuit transformations.
Hence $\cM^\a$ and  $\cN^\a$ are reducible. For the reducibility of 
$\cN^{-\a}$ and $\cM^{-\a}$, use the sign change $\a\mapsto -\a$. 
\end{proof}

Since $\cM^{-\a}$ and $\cN^{\a}$ are obtained from the sign change
$\a \mapsto -\a$ for $\cM^{\a}$ and $\cN^{-\a}$, 
we mainly treat $\cM^{\a}$ and $\cN^{-\a}$. 

\begin{proposition}
\label{prop:keep-int}
\begin{enumerate}
\item 
\label{item:invariant}
The intersection form is invariant under the monodromy representations,
that is 
$$
\la \cM_\rho^\a(\ell^\a),\cN_\rho^{-\a}(\g^{-\a})\ra
=\la \ell^\a,\g^{-\a}\ra,
$$
where $\rho$ is a loop in $X$ with terminal $\dot x$, and 
$\ell^\a$ and $\g^{-\a}$ are sections in $\lfSV{\a}$ and $\SV{-\a}$. 
\item 
\label{item:dual}
If $\b$ is an eigenvalue of $\cM_\rho^\a$, then 
$\b^{-1}$ is an eigenvalue of $\cN_\rho^{-\a}$. 
\item 
\label{item:orth}
Let $\ell^\a$ be a $\b$-eigenvector of $\cM_\rho^\a$, 
and $\g^{-\a}$ be a $\b'$-eigenvector of $\cN_\rho^{-\a}$.
If $\b\b'\ne 1$ then $\la \ell^\a,\g^{-\a}\ra=0$.
If $\la \ell^\a,\g^{-\a}\ra\ne0$ then $\b\b'= 1$.

\end{enumerate}
\end{proposition}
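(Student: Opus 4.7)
The plan is to prove the three parts in sequence, with part (\ref{item:invariant}) doing the substantive work and parts (\ref{item:dual}) and (\ref{item:orth}) following as formal consequences.

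For part (\ref{item:invariant}), I would appeal directly to Proposition \ref{prop:stable}(\ref{item:stable}): the intersection form is stable under horizontal deformations of $\lfLS{\a}$ and $\LS{-\a}$. A loop $\rho$ based at $\dot x$ is simply a horizontal deformation whose endpoints coincide, and by the very definition of the circuit transformations, $\cM_\rho^\a(\ell^\a)=\rho_*\ell^\a$ and $\cN_\rho^{-\a}(\g^{-\a})=\rho_*\g^{-\a}$ are the parallel transports of the two sections along $\rho$. Since the pairing is constant under simultaneous parallel transport of both arguments, the claimed identity $\la \cM_\rho^\a(\ell^\a),\cN_\rho^{-\a}(\g^{-\a})\ra=\la \ell^\a,\g^{-\a}\ra$ is immediate.

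For part (\ref{item:dual}), I would translate this identity into matrix form using the bases $\ell_0^\a,\dots,\ell_m^\a$ and $\g_0^{-\a},\dots,\g_m^{-\a}$ furnished by Proposition \ref{prop:perfect}. Writing $M$ and $N$ for the matrices of $\cM_\rho^\a$ and $\cN_\rho^{-\a}$ in these bases and $H$ for the intersection matrix (\ref{eq:int-mat}), the content of part (\ref{item:invariant}) becomes the relation $\tr M\cdot H\cdot N=H$. Since the form is perfect, $H$ is invertible (as already observed in the proof of Proposition \ref{prop:perfect}), so $N=H^{-1}\cdot(\tr M)^{-1}\cdot H$ is similar to $(\tr M)^{-1}$. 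Hence the eigenvalues of $\cN_\rho^{-\a}$ are precisely the reciprocals of the eigenvalues of $\cM_\rho^\a$.

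For part (\ref{item:orth}), I would apply part (\ref{item:invariant}) directly to an eigenpair: if $\cM_\rho^\a(\ell^\a)=\b\ell^\a$ and $\cN_\rho^{-\a}(\g^{-\a})=\b'\g^{-\a}$, then
\[
\la \ell^\a,\g^{-\a}\ra=\la \cM_\rho^\a(\ell^\a),\cN_\rho^{-\a}(\g^{-\a})\ra=\b\b'\la \ell^\a,\g^{-\a}\ra,
\]
so $(\b\b'-1)\la \ell^\a,\g^{-\a}\ra=0$, which yields both stated implications. No step here presents a real obstacle: part (\ref{item:invariant}) is essentially the fact that the pairing is a horizontal section of $\lfLS{\a}^\vee\otimes\LS{-\a}^\vee$, and parts (\ref{item:dual})--(\ref{item:orth}) are purely formal. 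The only point that warrants a brief verification is the sign/transpose convention when translating the bilinear invariance into the matrix identity $\tr M\cdot H\cdot N=H$.
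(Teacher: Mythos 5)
Your proposal is correct and follows essentially the same route as the paper: part (\ref{item:invariant}) by citing Proposition \ref{prop:stable}(\ref{item:stable}), part (\ref{item:dual}) by converting invariance into a matrix identity and using the perfectness of $\la\;,\;\ra$ (the paper picks the dual basis so the intersection matrix becomes $I_{m+1}$, whereas you keep a general $H$ and argue by similarity to $(\tr M)^{-1}$ --- an immaterial difference), and part (\ref{item:orth}) by the identical computation $(1-\b\b')\la\ell^\a,\g^{-\a}\ra=0$. The transpose/ordering convention you flag is indeed the only thing to pin down, and it does not affect the eigenvalue conclusion.
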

\begin{proof} (\ref{item:invariant}) 
This property is a natural consequence from Proposition 
\ref{prop:stable} (\ref{item:stable}). 

\noindent(\ref{item:dual})
Since $\cM_\rho^\a$ is invertible, $\b$ is different from $0$. 
Let $\ell^\a$ be a $\b$-eigenvector of $\cM_\rho^\a$ and  
let $\ell_{\rho,0}^\a(=\ell^\a),\ell_{\rho,1}^\a,\dots,\ell_{\rho,m}^\a$ be 
a basis of $\lfSV{\a}$. 
Then there exists the dual basis 
$\g_{\rho,0}^{-\a},\g_{\rho,1}^{-\a},\dots,\g_{\rho,m}^{-\a}$ of 
$\SV{-\a}$ with respect to 
the intersection form by Proposition \ref{prop:perfect}. 
Let $M_\rho^\a$ be the representation matrix of $\cM_\rho^{-\a}$ 
with respect to $\tr(\ell_{\rho,0}^{-\a},\ell_{\rho,1}^{-\a},\dots,
\ell_{\rho,m}^{-\a})$ and 
let $N_\rho^{-\a}$ be that of $\cN_\rho^{-\a}$ 
with respect to $(\g_{\rho,0}^{-\a},\g_{\rho,1}^{-\a},\dots,
\g_{\rho,m}^{-\a})$. 
By (\ref{item:invariant}), we have
$M_\rho^\a N_\rho^{-\a}=I_{m+1}$.
Hence $\b^{-1}$ is an eigenvalue of $\cN_\rho^{-\a}$ and $\g_{\rho,0}^{-\a}$ 
is $\b^{-1}$-eigenvector of $\cN_\rho^{-\a}$. 

\noindent(\ref{item:orth}) 
Since 
$$\la \ell^\a,\g^{-\a}\ra=\la \cM_\rho ^\a(\ell^\a),\cN_\rho^{-\a}(\g^{-\a})\ra
=\b\b'\la \ell^\a,\g^{-\a}\ra,$$
we have $(1-\b\b')\la \ell^\a,\g^{-\a}\ra=0$. 
\end{proof}

To characterize the monodromy representations $\cM^{\a}$ and $\cN^{-\a}$, 
it is sufficient to express the circuit transformations 
$$\cM^\a_{pq}=\cM^\a_{\rho_{pq}},\quad 
\cN^{-\a}_{pq}=\cN^{-\a}_{\rho_{pq}}\quad 
(0\le p<q\le m+1,\ (p,q)\ne(0,m+1))
$$
by Fact \ref{fact:pi1}.

Under the condition (\ref{eq:old-condition}), 
$\cN^{-\a}_{pq}$ is obtained from the sign change $\a\mapsto -\a$ for 
$\cM^\a_{pq}$, since the map $\jmath_h^\a$ is isomorphic. 
Moreover, the circuit transform $\cM^\a_{pq}$ is expressed 
by the intersection form $\la\ ,\ \ra$ as follows.

\begin{fact}[{\cite[Theorem 5.1]{M2}}] 
\label{fact:week-monodromy} 
Under the assumption (\ref{eq:old-condition}), 
$\cM^\a_{pq}$ is expressed as
$$\cM^\a_{pq}: \ell^\a \mapsto  
\ell^\a-(\l_p-1)(\l_q-1)\la  \ell^\a , 
\reg(\ell_{\imath(p)\imath(q)}^{-\a})
\ra \ell_{\imath(p)\imath(q)}^{\a},$$
where $\imath\in \frak{S}_{m+3}$ 
is given in (\ref{eq:permutation}),  
$$\ell_{\imath(p)\imath(q)}^{\a}=
\ell_{\imath(q)}^{\a}-\ell_{\imath(p)}^{\a},\quad 
\ell_{\imath(p)\imath(q)}^{-\a}=
\ell_{\imath(q)}^{-\a}-\ell_{\imath(p)}^{-\a},$$
$\ell^\a_i$ $(0\le i\le m+2)$ 
are defined in (\ref{eq:lf-cycle}) and Remark \ref{rem:extended-cycles}, 
$\ell_{i}^{-\a}$ is obtained from the sign change $\a\mapsto -\a$ 
for $\ell_{i}^{\a}$, 
and $\reg$ is the inverse of $\jmath_h^{-\a}:\tH{-\a}\to \lftH{-\a}$.
 \end{fact}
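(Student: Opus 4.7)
The plan is to recognize $\cM^\a_{pq}$ as a complex reflection of Picard--Lefschetz type, characterized by (i) its eigenvalue $\l_p\l_q$ on a distinguished vanishing cycle and (ii) its invariant hyperplane under the intersection pairing. Since $\rho_{pq}$ moves $\dot x_p$ once positively around $\dot x_q$ inside $\H\cap \C_p(\dot x)$, the arc from $\dot x_p$ to $\dot x_q$ plays the role of a vanishing cycle: the difference $\ell^\a_{\imath(p)\imath(q)}=\ell^\a_{\imath(q)}-\ell^\a_{\imath(p)}$ equals, after cancellation of the common tail from $x_{i_{m+1}}$, precisely the arc from $\dot x_p$ to $\dot x_q$ in $\H$ loaded with $u_x$. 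Transporting the branch of $u_x$ around a Pochhammer-type contour enclosing $\{\dot x_p,\dot x_q\}$ then shows $\cM^\a_{pq}(\ell^\a_{\imath(p)\imath(q)})=\l_p\l_q\,\ell^\a_{\imath(p)\imath(q)}$.

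Next I would identify the invariant hyperplane. Under (\ref{eq:old-condition}) the regularization $\reg:\lftH{-\a}\to \tH{-\a}$ is an isomorphism, and $\reg(\ell^{-\a}_{\imath(p)\imath(q)})$ is realized by a compact Pochhammer cycle supported in a small closed disk $D$ containing the segment $[\dot x_p,\dot x_q]$. For any section $\ell^\a\in\lfSV{\a}$ satisfying $\la \ell^\a,\reg(\ell^{-\a}_{\imath(p)\imath(q)})\ra=0$, I would exhibit a representative whose geometric support avoids $D$; such a representative is manifestly fixed by $\rho_{pq}$, which perturbs only points inside $D$. The perfectness of $\la\,,\,\ra$ from Proposition \ref{prop:perfect} then guarantees that this $m$-dimensional hyperplane is exactly the fixed subspace of $\cM^\a_{pq}$.

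Combining the two steps, $\cM^\a_{pq}$ must be of the form
$$
\ell^\a\mapsto \ell^\a - c\,\la \ell^\a,\reg(\ell^{-\a}_{\imath(p)\imath(q)})\ra\,\ell^\a_{\imath(p)\imath(q)},
$$
with $c$ fixed by the eigenvalue condition
$$
1 - c\,\la \ell^\a_{\imath(p)\imath(q)},\reg(\ell^{-\a}_{\imath(p)\imath(q)})\ra=\l_p\l_q.
$$
A short computation using the $2\times 2$ block of $H$ in (\ref{eq:int-mat}) indexed by $\imath(p),\imath(q)$ (together with the explicit formula for $\gamma^{-\a}_k$ in terms of $\reg(\ell^{-\a}_k)$) evaluates $\la \ell^\a_{\imath(p)\imath(q)},\reg(\ell^{-\a}_{\imath(p)\imath(q)})\ra=(1-\l_p\l_q)/[(\l_p-1)(\l_q-1)]$, giving $c=(\l_p-1)(\l_q-1)$. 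The principal obstacle is the second step: constructing, for each class orthogonal to $\reg(\ell^{-\a}_{\imath(p)\imath(q)})$, a locally finite representative whose support avoids $D$. This requires invoking non-integrality (\ref{eq:old-condition}) to apply the regularization isomorphism and a careful deformation argument, with the boundary cases $p=0$ and $q=m+1$ handled separately by rewriting $\ell^\a_{m+1}$ and $\ell^\a_{m+2}$ via Remark \ref{rem:extended-cycles}.
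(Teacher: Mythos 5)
Your overall strategy --- pin down the $\l_p\l_q$-eigenvector $\ell^\a_{\imath(p)\imath(q)}$, identify the fixed hyperplane as the orthogonal complement of the regularized vanishing cycle, and then solve for the scalar $c$ from the eigenvalue condition --- is the same reflection-type argument that underlies this Fact (which the paper itself only quotes from \cite[Theorem 5.1]{M2} without proof) and that the paper redeploys in Case 1 of the proof of Theorem \ref{th:main}. Your scalar computation is consistent with the paper: using $\g^{-\a}_{\imath(p)\imath(q)}=(\l_p^{-1}-1)(\l_q^{-1}-1)\reg(\ell^{-\a}_{\imath(p)\imath(q)})$ and (\ref{eq:intno-DV}) one indeed gets $\la \ell^\a_{\imath(p)\imath(q)},\reg(\ell^{-\a}_{\imath(p)\imath(q)})\ra=(1-\l_p\l_q)/[(\l_p-1)(\l_q-1)]$ and hence $c=(\l_p-1)(\l_q-1)$.

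The step you flag as the ``principal obstacle'' is, as written, a genuine gap. Orthogonality $\la \ell^\a,\reg(\ell^{-\a}_{\imath(p)\imath(q)})\ra=0$ is a condition on a homology class, and it does not by itself produce a locally finite representative whose support avoids the disk $D$: a class can pair to zero with the Pochhammer cycle while every representative meets $D$ (the vanishing of a signed, branch-weighted sum of intersection points is far weaker than empty geometric intersection). The way this is actually closed --- both in \cite{M2} and in the paper's proof of Theorem \ref{th:main} --- is to exhibit an \emph{explicit} spanning set of the fixed hyperplane: the arcs $\breve{\ell}^\a_{\infty,j}$ in the lower half-plane from $x_{m+2}=\infty$ to the points $x_{i_j}$ with $i_j\ne p,q$, which are manifestly untouched by the isotopy realizing $\rho_{pq}$, together with one further such cycle; one then checks via the (triangular, nondegenerate) intersection matrix against the $\g^{-\a}_k$ that these give $m$ linearly independent $1$-eigenvectors lying in $(\g^{-\a}_{\imath(p)\imath(q)})^\perp$, and perfectness of $\la\,,\,\ra$ forces equality with the full orthogonal complement. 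So your outline is salvageable, but you must replace the ``deform the support off $D$'' claim by such an explicit basis (or an excision/Mayer--Vietoris argument), and the boundary case that actually needs separate care is not $p=0$ or $q=m+1$ per se but the situation where $\imath(p)$ or $\imath(q)$ equals $m+1$, i.e.\ one of the two points is the common endpoint of all the arcs $\ell^\a_k$, where the ``cancel the common tail'' description of $\ell^\a_{\imath(p)\imath(q)}$ breaks down and one argues as in the paper by choosing the other index.
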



We can modify this fact so that it is valid under the condition 
(\ref{eq:non-int}). 

\begin{theorem}
\label{th:main}
Suppose that the condition  (\ref{eq:non-int}) and   
the indexes $p$ and $q$ satisfy $0\le p<q\le m+1,$ $(p,q)\ne(0,m+1)$. 
The circuit transformations 
$\cM^\a_{pq}$ and $\cN^{-\a}_{pq}$ are expressed as
\begin{align}
\label{eq:cM-exp}
\cM^\a_{pq}:\hspace{3mm}  \ell^\a &\mapsto  
\ell^\a-\l_{p}\l_{q}
\la \ell^\a ,\g_{\imath(p)\imath(q)}^{-\a}\ra
\ell_{\imath(p)\imath(q)}^{\a},\\
\label{eq:cN-exp}
\cN^{-\a}_{pq}: \g^{-\a} &\mapsto  
\g^{-\a}+
\la \ell_{\imath(p)\imath(q)}^\a , \g^{-\a}\ra 
\g_{\imath(p)\imath(q)}^{-\a},
\end{align}
where  $\imath\in \frak{S}_{m+3}$ 
is given in (\ref{eq:permutation}),  
$$\ell_{\imath(p)\imath(q)}^{\a}=
\ell_{\imath(q)}^{\a}-\ell_{\imath(p)}^{\a},\quad 
\g_{\imath(p)\imath(q)}^{-\a}
=(\l_p^{-1}-1)\g_{\imath(q)}^{-\a}-(\l_q^{-1}-1)\g_{\imath(p)}^{-\a},$$
$\ell^\a_i$ and $\g_{i}^{-\a}$ $(0\le i\le m+2)$ 
are defined in (\ref{eq:lf-cycle}), (\ref{eq:cycle})
and Remark \ref{rem:extended-cycles}. 
\end{theorem}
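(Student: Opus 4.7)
The plan is to deduce both formulas from Fact \ref{fact:week-monodromy} by first rewriting that fact in terms of the cycles $\g_{\imath(p)\imath(q)}^{-\a}$ rather than the regularization $\reg(\ell_{\imath(p)\imath(q)}^{-\a})$, and then extending from the generic condition (\ref{eq:old-condition}) to (\ref{eq:non-int}) by a rationality argument in $\l=(\l_0,\ldots,\l_{m+1})$. Under (\ref{eq:old-condition}) the map $\jmath_h^{-\a}$ is an isomorphism, and since each small circle $\circlearrowleft_{i_j}$ bounds a locally finite $2$-disk, the definition (\ref{eq:cycle}) gives $\jmath_h^{-\a}(\g_k^{-\a})=(\l_{i_k}^{-1}-1)\ell_k^{-\a}$. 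Hence $\reg(\ell_k^{-\a}) = \g_k^{-\a}/(\l_{i_k}^{-1}-1)$ in $\tH{-\a}$, and a short rearrangement yields
\begin{equation*}
(\l_p-1)(\l_q-1)\,\reg(\ell_{\imath(p)\imath(q)}^{-\a}) = \l_p\l_q\,\g_{\imath(p)\imath(q)}^{-\a}.
\end{equation*}
Substituting this into Fact \ref{fact:week-monodromy} produces (\ref{eq:cM-exp}) in the case (\ref{eq:old-condition}).

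I would then extend (\ref{eq:cM-exp}) to the full condition (\ref{eq:non-int}) by rationality. Under (\ref{eq:non-int}) the bases $\{\ell_k^\a\}_{0\le k\le m}$ and $\{\g_k^{-\a}\}_{0\le k\le m}$ of Proposition \ref{prop:perfect} are well defined, and $\det H\neq 0$ by (\ref{eq:int-mat}). Consequently, the matrix of $\cM_{pq}^\a$ relative to $\{\ell_k^\a\}$, computed from intersection numbers against the $H$-dual basis in $\tH{-\a}$, is a rational function of $\l$; the matrix encoded by the right-hand side of (\ref{eq:cM-exp}) is visibly rational in $\l$ by (\ref{eq:int-mat}). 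These two rational matrix-valued functions agree on the Zariski-dense locus (\ref{eq:old-condition}) by the previous step, hence coincide throughout (\ref{eq:non-int}).

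For (\ref{eq:cN-exp}) I would use the adjoint identity $\la \cM_{pq}^\a(\ell^\a),\cN_{pq}^{-\a}(\g^{-\a})\ra=\la \ell^\a,\g^{-\a}\ra$ from Proposition \ref{prop:keep-int}(\ref{item:invariant}). Adopt the ansatz $\cN_{pq}^{-\a}(\g^{-\a})=\g^{-\a}+c(\g^{-\a})\g_{\imath(p)\imath(q)}^{-\a}$ with $c$ a linear functional in $\g^{-\a}$; this can be justified either by applying the argument of Steps 1--2 to the sign-changed Fact \ref{fact:week-monodromy} for $\lftH{-\a}$ and transporting across $\jmath_h^{-\a}$ in the generic case, or by the Picard--Lefschetz picture identifying $\g_{\imath(p)\imath(q)}^{-\a}$ as the vanishing cycle attached to $\rho_{pq}$. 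Expanding the adjoint identity using (\ref{eq:cM-exp}) together with the identity $\la \ell_{\imath(p)\imath(q)}^\a,\g_{\imath(p)\imath(q)}^{-\a}\ra=\l_p^{-1}\l_q^{-1}-1$, a short computation from (\ref{eq:int-mat}) valid for either ordering of $\imath(p)$ and $\imath(q)$, then forces $c(\g^{-\a})=\la \ell_{\imath(p)\imath(q)}^\a,\g^{-\a}\ra$, yielding (\ref{eq:cN-exp}).

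The principal obstacle is the rationality claim in Step 2: one must verify that with $\C(\l)$-bases as above, the entries of the monodromy matrix of $\cM_{pq}^\a$ are themselves $\C(\l)$-valued, with denominators divisible only by $\det H$, which is nowhere zero under (\ref{eq:non-int}) together with the alignment (\ref{eq:non-int-2-parameters}). The secondary obstacle is justifying the reflection ansatz for $\cN_{pq}^{-\a}$ in Step 3, but this is handled by either of the two alternatives indicated above.
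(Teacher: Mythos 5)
Your strategy is genuinely different from the paper's: you propose to deduce the theorem by rewriting Fact \ref{fact:week-monodromy} in terms of $\g_{\imath(p)\imath(q)}^{-\a}$ and then extending from the generic locus (\ref{eq:old-condition}) to (\ref{eq:non-int}) by rationality in $\l$, whereas the paper proves the formula directly for every parameter by a case analysis (five cases according to whether $\a_p$, $\a_q$, $\a_p+\a_q$ are integral and whether $r=m$), tracing the explicit deformation of cycles along $\rho_{pq}$ and computing the dimensions of the relevant eigenspaces. The translation in your Step 1 is essentially correct in its conclusion: one does have $\jmath_h^{-\a}(\g_k^{-\a})=(\l_{i_k}^{-1}-1)\ell_k^{-\a}$ under (\ref{eq:old-condition}), and from this the identity $(\l_p-1)(\l_q-1)\reg(\ell_{\imath(p)\imath(q)}^{-\a})=\l_p\l_q\,\g_{\imath(p)\imath(q)}^{-\a}$ follows, so (\ref{eq:cM-exp}) is indeed the correct restatement of Fact \ref{fact:week-monodromy} in the generic case. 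But your justification of that step is wrong as stated: under (\ref{eq:old-condition}) the branch $u_x^{-1}$ is \emph{not} single-valued on the annulus $\odot_{i_j}$, so $\circlearrowleft_{i_j}$ does not bound a locally finite $2$-chain; rather, cutting the annulus along a ray shows $(\circlearrowleft_{i_j},u_x^{-1})$ is homologous in $\lftH{-\a}$ to a nonzero multiple of the loaded ray into $x_{i_j}$, and it is this standard regularization computation that yields the identity.

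The genuine gap is Step 2, which you yourself flag as the principal obstacle but do not close, and it is precisely the content that the paper's Cases 2--5 supply. The difficulty is that the locus (\ref{eq:non-int}) is not a Zariski-open neighbourhood of the generic locus inside a single family of homology groups: when some $\l_i$ specializes to $1$ the boundary relations change (new locally finite boundaries appear, $\jmath_h^\a$ acquires a kernel and cokernel by Lemma \ref{lem:non-iso}), the required alignment (\ref{eq:x-align}) and hence the permutation $\imath$ and the bases $\ell_k^\a$, $\g_k^{-\a}$ are themselves defined in terms of which parameters are integral, and the reflection normalization of Remark \ref{rem:cancel} degenerates exactly when $\l_p\l_q=1$. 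To make the rationality argument work you must fix the geometric configuration and the chains once and for all (independently of $\l$), show that the chain-level intersection numbers $\la\rho_*\ell_j^\a,\g_k^{-\a}\ra$ are Laurent polynomials in $\l$ divided only by powers of $\l_{i_{m+1}}-1$, invoke the perfectness of $H$ throughout (\ref{eq:non-int}) to convert these into basis coefficients, and verify that Fact \ref{fact:week-monodromy} applies at nearby generic parameters \emph{with the special configuration's ordering}. None of this is carried out, and without it the claim that the two rational matrix-valued functions ``agree throughout'' is unsupported; in particular your argument gives no information in Case 4 of the paper ($r=m$, $\{p,q\}=\{i_{m+1},i_{m+2}\}$), where both sides must be shown to reduce to the identity. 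A smaller remark on Step 3: the ansatz for $\cN_{pq}^{-\a}$ is unnecessary, since perfectness of $\la\ ,\ \ra$ together with Proposition \ref{prop:keep-int}(\ref{item:invariant}) determines $\cN_{pq}^{-\a}$ as the inverse adjoint of $\cM_{pq}^\a$, i.e. $N_{pq}^{-\a}=H^{-1}(M_{pq}^\a)^{-1}H$, after which (\ref{eq:cN-exp}) can be checked by direct computation; your proposed justification via transporting across $\jmath_h^{-\a}$ fails exactly in the degenerate cases where that map is not an isomorphism.
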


\begin{remark}
\label{rem:cancel}
We set 
\begin{align*}
(\g_{\imath(p)\imath(q)}^{-\a})^\perp&=
\{\ell^\a\in \lftH{\a}\mid \la \ell^\a,\g_{\imath(p)\imath(q)}^{-\a}\ra=0\},\\
(\ell_{\imath(p)\imath(q)}^\a)^\perp&=
\{\g^{-\a}\in \tH{-\a}\mid \la \ell_{\imath(p)\imath(q)}^\a ,\g^{-\a}\ra=0\}.
\end{align*}
It is easy to see that these spaces belong to 
the $1$-eigenspace of the expressions (\ref{eq:cM-exp}) and 
(\ref{eq:cN-exp}), respectively, and that their dimensions are more then 
or equal to $m$.
If $\g_{\imath(p)\imath(q)}^{-\a}$ (resp. $\ell_{\imath(p)\imath(q)}^\a$) 
is the zero element, then $(\g_{\imath(p)\imath(q)}^{-\a})^\perp$ 
(resp. $(\ell_{\imath(p)\imath(q)}^\a)^\perp $) is $(m+1)$-dimensional  
and (\ref{eq:cM-exp}) (resp. (\ref{eq:cN-exp})) becomes the identity. 
If $\g_{\imath(p)\imath(q)}^{-\a}$ (resp. $\ell_{\imath(p)\imath(q)}^\a$) 
is different from the zero element  then 
$ (\g_{\imath(p)\imath(q)}^{-\a})^\perp$ 
(resp. $(\ell_{\imath(p)\imath(q)}^\a)^\perp$) is $m$-dimensional,
since the intersection form $\la\ ,\ \ra$ is perfect. 
In this case,   (\ref{eq:cM-exp}) (resp. (\ref{eq:cN-exp}) ) 
is characterized by the space $(\g_{\imath(p)\imath(q)}^{-\a})^\perp$ 
(resp. $(\ell_{\imath(p)\imath(q)}^\a)^\perp$)
and the image of an element in its complement. 
In particular, if $\a_p+\a_q\notin \Z$ then  we have 
\begin{equation}
\label{eq:intno-DV}
\la \ell_{\imath(p)\imath(q)}^\a ,\g_{\imath(p)\imath(q)}^{-\a}\ra=
\frac{1-\l_p\l_q}{\l_p\l_q}\ne0, 
\end{equation}
which implies that neither $\ell_{\imath(p)\imath(q)}^{\a}$ nor 
$\g_{\imath(p)\imath(q)}^{-\a}$ is the zero element.   
Moreover, we can rewrite 
(\ref{eq:cM-exp}) and (\ref{eq:cN-exp}) into  
complex reflections with respect to the intersection form $\la\ ,\ \ra$:
\begin{align*}
\ell^\a&\mapsto 
\ell^\a-(1-\l_p\l_q)\frac{\la \ell^\a , \g_{\imath(p)\imath(q)}^{-\a}\ra }
{\la \ell_{\imath(p)\imath(q)}^\a ,\g_{\imath(p)\imath(q)}^{-\a}\ra }
\ell_{\imath(p)\imath(q)}^{\a},\\
%
\g^{-\a}&\mapsto 
\g^{-\a}-(1-\l_p^{-1}\l_q^{-1})
\frac{\la \ell_{\imath(p)\imath(q)}^\a , \g^{-\a}\ra }
{\la \ell_{\imath(p)\imath(q)}^\a ,\g_{\imath(p)\imath(q)}^{-\a}\ra }
\g_{\imath(p)\imath(q)}^{-\a}.
\end{align*}
Note that 
$\ell_{\imath(p)\imath(q)}^{\a}(\notin (\g_{\imath(p)\imath(q)}^{-\a})^\perp)$ 
is a $\l_p\l_q$-eigenvector of (\ref{eq:cM-exp}), and that
$\g_{\imath(p)\imath(q)}^{-\a}(\notin (\ell_{\imath(p)\imath(q)}^{\a})^\perp)$ 
is a $\l_p^{-1}\l_q^{-1}$-eigenvector of (\ref{eq:cN-exp}).
\end{remark}
\begin{proof}[Proof of Theorem \ref{th:main}] $ $\\ \noindent
By tracing deformations of some cycles along $\rho_{pq}$, 
we study eigenspaces of $\cM^\a_{pq}$ and $\cN^{-\a}_{pq}$.  
By Proposition \ref{prop:keep-int} (\ref{item:dual}),  
it is sufficient to consider either $\cM^\a_{pq}$ or $\cN^{-\a}_{pq}$.
We can show that 
\begin{equation}
\label{eq:non-1-EV}
\cM^\a_{pq}(\ell_{\imath(p)\imath(q)}^{\a})
=(\l_p\l_q)\cdot \ell_{\imath(p)\imath(q)}^{\a}
\end{equation}
by using the proof of \cite[Lemma 5.1]{M2}, 
which is valid under the condition (\ref{eq:non-int}). 
If $\ell_{\imath(p)\imath(q)}^{\a}$ does not degenerate then 
it becomes a $\l_p\l_q$-eigenvector of $\cM^\a_{pq}$.

We consider $m$ elements $\breve{\ell}_{\infty,j}^\a\in \lftH{\a}$
given by oriented arcs $\breve{\ell}_{\infty,j}$ in the lower half space 
from $x_{m+2}=\infty$ to $x_{i_j}$ $(i_j\ne p,q,m+2)$ 
and branches of $u_x$ on them. 
Here note that $p$ and $q$ are different from $m+2$ by our setting of 
$\rho_{pq}$.  
Since these arcs are not involved the deformation along the loop $\rho_{pq}$, 
if they do not degenerate then they become $1$-eigenvectors of $\cM^\a_{pq}$.  
It is easy to see that they belong to $(\g_{\imath(p)\imath(q)}^{-\a})^\perp$.
Let $E_{pq}^\a(1)$ be the subspace of $\lftH{\a}$ spanned by them.
We claim that 
$$\dim E_{pq}^\a(1)\ge m-1$$
under the condition (\ref{eq:non-int}).  
In fact, we have at least $(m-1)$ elements $\breve{\ell}_{\infty,j}^\a$
$(j\ne \imath(p),\imath(q),m+1,\imath(m+2))$. 
(If $\imath(p)=m+1$ or $\imath(q)=m+1$ then we have $m$ elements.)
Note that the arc $\breve{\ell}_{\infty,j}$ intersects 
only one oriented circle $\circlearrowleft_{i_{j}}$ 
among chains defining $\g_k^{-\a}$ $(k\in\{0,1,\dots,m,m+2\}-
\{\imath(m+2)\})$.
Thus the intersection matrix 
$$H_{m-1}'=\big(\la \breve{\ell}_{\infty,j}^\a, 
\g_k^{-\a}\ra\big)_{j,k}
\quad (j,k\in \{0,1,\dots,m,m+2\} -\{\imath(p),\imath(q),\imath(m+2)\})$$ 
becomes a diagonal matrix of size $(m-1)$ with non-zero diagonal entries,
which means that they are linearly independent. 

To investigate the detailed structure of these eigenspaces,  
we need the following case studies on parameters. 

\medskip\noindent
Case 1: $\a_p,\a_q, \a_p+\a_q\notin \Z$.\\
We may assume that  $\imath(p)\ne m+1$ since   
either $\imath(p)\ne m+1$ or $\imath(q)\ne m+1$ holds. 
We extend the intersection matrix $H'_{m-1}$ to $H'_{m}$ by adding the 
$\breve{\ell}_{\infty,m+1}^\a$ and $\g_{\imath(p)}^{-\a}$. 
We can choose a branch of $u_x$ on $\breve{\ell}_{\infty,m+1}$ so that 
$$
\la \breve{\ell}_{\infty,m+1}^\a, \g_{\imath(p)}^{-\a}\ra
=-\frac{\l_{p}^{-1}-1}{\l_{i_{m+1}}^{-1}-1}\ne 0.
$$
Since 
$$
\la \breve{\ell}_{\infty,j}^\a, 
\g_{\imath(p)}^{-\a}\ra=0 
$$
for $j\in \{0,1,\dots,m,m+2\} -\{\imath(p),\imath(q),\imath(m+2)\}$,  
$H'_m$ is lower triangle and $\det(H_m)\ne 0$. Thus we have 
$\dim E_{pq}^\a(1)=m$.
By Remark \ref{rem:cancel}, 
$\cM^\a_{pq}$ admits the expression (\ref{eq:cM-exp}).


\medskip\noindent
Case 2: $\a_p\notin \Z$, $\a_q\in \Z$ or $\a_p\in \Z$, $\a_q\notin \Z$.\\
We show the former. 
Since $\a_p+\a_q\notin \Z$, ${\ell}_{\imath(p)\imath(q)}^\a$ is 
an eigenvector $\cM^\a_{pq}$ of eigenvalue $\l_p\l_q\ne1$. 
If $p\ne i_{m+1}$ then  we can show that 
the space $\lftH{\a}$ is spanned by $E_{pq}^\a(1)$ and 
the eigenvector ${\ell}^\a_{\imath(p)\imath(q)}$ by the same way as in Case 1. 
If $p=i_{m+1}$ then the intersection matrix 
$$\big(\la \breve{\ell}_{\infty,j}^\a, 
\g_k^{-\a}\ra\big)_{j,k}
\quad (j,k\in \{0,1,\dots,m,m+2\} -\{\imath(q),\imath(m+2)\})$$ 
becomes a diagonal matrix of size $m$ with non-zero diagonal entries, which 
implies that $\dim E_{pq}^\a(1)=m$.
Thus $\cM^\a_{pq}$ admits the expression (\ref{eq:cM-exp}).

\medskip\noindent
Case 3: $\a_p,\a_q\notin \Z$, $\a_p+\a_q\in \Z$, $r<m$.\\
Since $r<m$, there exists $0\le k\le m$ such that $\a_{i_k}\not \in \Z.$ 
We can reconstruct bases of $\lftH{\a}$ and $\tH{-\a}$ so that  we 
choose the index $i_{m+1}$ satisfying $i_{m+1}\ne p,q$. 
We can show that $\dim E_{pq}^\a(1)=m$ by the same way as in Case 1.
In this case,  ${\ell}^\a_{\imath(p)\imath(q)}$ satisfies 
$$\cM^\a_{pq}({\ell}^\a_{\imath(p)\imath(q)})
={\ell}^\a_{\imath(p)\imath(q)},\quad 
\la \ell_{\imath(p)\imath(q)}^\a ,\g_{\imath(p)\imath(q)}^{-\a}\ra= 0$$
by $\l_p\l_q= 1$, (\ref{eq:intno-DV}) and (\ref{eq:non-1-EV}).
Since $i_{m+1}\ne \imath(p),\imath(q)$, we have
$$
\la \ell_{\imath(p)\imath(q)}^\a,\g_{\imath(p)}^{-\a} \ra=-\frac{1}{\l_p}
\ne 0,
\quad 
\la \ell^{\a}_{\imath(p)},\g_{\imath(p)\imath(q)}^{-\a}\ra=
\frac{\l_{q}-1}{\l_{q}}
\ne 0.
$$
Then neither $\ell_{\imath(p)\imath(q)}^{\a}$ nor 
$\g_{\imath(p)\imath(q)}^{-\a}$ is the zero element.
Thus $\ell_{\imath(p)\imath(q)}^{\a}$ is a $1$-eigenvector 
of $\cM^\a_{pq}$ and $(\g_{\imath(p)\imath(q)}^{-\a})^\perp$ is 
$m$-dimensional, and we have 
$$ 0\ne \ell_{\imath(p)\imath(q)}^{\a}\in E_{pq}^\a(1)=
(\g_{\imath(p)\imath(q)}^{-\a})^\perp\subsetneqq \lftH{\a}.$$
Since the space spanned by eigenvectors of $\cM^\a_{pq}$ 
is $m$-dimensional,  $\cM^\a_{pq}$ is not diagonalizable. 
To characterize $\cM^\a_{pq}$, we have only to know the image 
of an element $\ell^\a$ satisfying 
$\la \ell^{\a},\g_{\imath(p)\imath(q)}^{-\a}\ra \ne 0$, 
since this $\ell^{\a}$ does not belong to 
$(\g_{\imath(p)\imath(q)}^{-\a})^\perp$ which coincides with the 
$1$-eigenspace of $\cM^\a_{pq}$. 
By using the evaluated value of 
$\la \ell^{\a}_{\imath(p)},\g_{\imath(p)\imath(q)}^{-\a}\ra$, 
we obtain the image of $\ell^\a=\ell_{\imath(p)}^\a$ 
under the expression (\ref{eq:cM-exp}) as 
$$\ell^{\a}_{\imath(p)}-\l_{p}(\l_{q}-1)
\ell^\a_{\imath(p)\imath(q)}.$$
On the other hand, it is easy to see that 
the continuation of $\ell^{\a}_{\imath(p)}$ along 
$\rho_{pq}$ is added 
$\l_{p}(1-\l_{q})\ell^\a_{\imath(p)\imath(q)}$ 
to it. Hence $\cM^\a_{pq}$ admits the expression (\ref{eq:cM-exp}).

\medskip\noindent
Case 4: $\a_p,\a_q\notin \Z$, $\a_p+\a_q\in \Z$, $r=m$.\\
In this case, we have $\{p,q\}=\{i_{m+1},i_{m+2}\}$, $x_{i_0}=-\infty$, and 
$\g_k^{-\a}=(\circlearrowleft_{i_{k}},u^{-1}_x)$ $(k=0,1,\dots,m)$.
It is obvious that $\g_k^{-\a}$ are invariant under the deformation 
along $\rho_{pq}$. Thus not only $\cN^{-\a}_{pq}$ but also 
$\cM^\a_{pq}$ is the identity. 
On the other hand, 
$\ell_{\imath(p)\imath(q)}^{\a}$ is homologous to the zero element,  
since $\la \ell_{\imath(p)\imath(q)}^{\a},\g_k^{-\a}\ra=0$ for 
$k=0,1,\dots,m$. Hence each of the expressions (\ref{eq:cM-exp}) and 
(\ref{eq:cN-exp}) is the identity.
In this case, $E_{pq}^\a(1)$ is $m$-dimensional by the argument 
in Case 2 for $p=i_{m+1}$.

\medskip\noindent
Case 5: $\a_p,\a_q\in \Z$.\\ 
Since (\ref{eq:non-1-EV}), $\l_p\l_q=1$ and 
$\la \ell_{\imath(p)\imath(q)}^{\a},\g_{\imath(q)}^{-\a}\ra\ne 0$, 
$\ell_{\imath(p)\imath(q)}^{\a}$ is a $1$-eigenvector of $\cM^\a_{pq}$. 
In this case, $\imath(p),\imath(q)<r<m+1$  and 
we may not extend the intersection matrix $H'_{m-1}$ to 
$H'_{m}$ by adding the $\breve{\ell}_{\infty,m+1}^\a$ and 
$\g_{\imath(p)}^{-\a}$ or $\g_{\imath(q)}^{-\a}$ as in Case 1,  
since 
$$
\la \breve{\ell}_{\infty,m+1}^\a, \g_{\imath(p)}^{-\a}\ra
=\la \breve{\ell}_{\infty,m+1}^\a, \g_{\imath(q)}^{-\a}\ra
=0.
$$
However, we have another $1$-eigenvector
of $\cM^\a_{pq}$ not in $E_{pq}^\a(1)$.  
As seen in Case 3, the continuation of 
$\ell^{\a}_{\imath(p)}$ along $\rho_{pq}$ is 
$\ell^{\a}_{\imath(p)}+\l_{p}(1-\l_{q})\ell^\a_{\imath(p)\imath(q)}$. 
Since $\l_q=1$ and $\la \ell_{\imath(p)}^\a,\g_{\imath(p)}^{-\a}\ra\ne 0$,  
$\ell^{\a}_{\imath(p)}$ is a $1$-eigenvector of $\cM^\a_{pq}$.
Hence the $1$-eigenspace of $\cM^\a_{pq}$ is spanned by $E_{pq}^\a(1)$, 
$\ell^{\a}_{\imath(p)}$ and 
$\ell^{\a}_{\imath(q)}=\ell_{\imath(p)\imath(q)}^{\a}+\ell^{\a}_{\imath(p)}$.
Since it coincides with $\lftH{\a}$, $\cM^\a_{pq}$ is the identity. 
On the other hand, each of the expressions  (\ref{eq:cM-exp}) and 
(\ref{eq:cN-exp}) reduces to the identity, 
since $\g_{\imath(p)\imath(q)}^{-\a}$ 
degenerates to the zero element in this case by its definition.  
\end{proof}

\section{Circuit Matrices}
\label{sec:c-Matrix}
Let $M_{pq}^\a$ and $N_{pq}^{-\a}$ be the representation matrix  
of $\cM^\a_{pq}$ with respect to the basis 
$\tr(\ell_0^\a,\ell_1^\a,\dots,\ell_m^\a)$ of $\lfSV{\a}$ 
and 
that of $\cN^{-\a}_{pq}$ with respect to $(\g_0^{-\a},\g_1^{-\a},\dots,
\g_m^{-\a})$ of $\SV{-\a}$. 
That is, the bases $\tr(\ell_0^\a,\ell_1^\a,\dots,\ell_m^\a)$ and 
$(\g_0^{-\a},\g_1^{-\a},\dots,\g_m^{-\a})$ are transformed into 
$$M_{pq}^\a \tr(\ell_0^\a,\ell_1^\a,\dots,\ell_m^\a), \quad 
(\g_0^{-\a},\g_1^{-\a},\dots,\g_m^{-\a})N_{pq}^{-\a}$$
by the continuation along $\rho_{pq}$.
We give their explicit forms. 

\begin{cor}
\label{cor:rep-mat}
We have 
\begin{align}
\label{eq:Mpq}
M_{pq}^\a&=I_{m+1}-\l_p\l_q  H \bw^{-\a}_{\imath(p)\imath(q)} \bv ^\a_{\imath(p)\imath(q)},\\
\label{eq:Npq}
N_{pq}^{-\a}&=I_{m+1}+ \bw^{-\a}_{\imath(p)\imath(q)} \bv ^\a_{\imath(p)\imath(q)}H,
\end{align}
where  $H$ is 
the intersection matrix given in (\ref{eq:int-mat}), 
the row vector $\bv ^\a_{\imath(p)\imath(q)}$ and the column vector 
$\bw^{-\a}_{\imath(p)\imath(q)}$ are expressed as
linear combinations 
$$\bv ^\a_{\imath(p)\imath(q)}=\ex_{\imath(q)}-\ex_{\imath(p)},\quad 
\quad \bw^{-\a}_{\imath(p)\imath(q)}
=(\l_p^{-1}-1)\ex^*_{\imath(q)}-(\l_q^{-1}-1)\ex^*_{\imath(p)}.
$$
Here $\ex_k$ $(k=0,1,\dots,m)$ are the unit row vectors 
$$\ex_0=(1,0,\dots,0),\ \ex_1=(0,1,0,\dots,0),\ \dots,\ 
\ex_m=(0,\dots,0,1)$$ 
of size $m+1$, $\ex_{m+1}=(0,0,\dots,0)$ and 
$\ex_{m+2}$ is given in (\ref{eq:e(m+2)}), 
$\ex_k^*=\tr\ex_k$ for $k=0,1,\dots,m,m+1$, and 
$\ex_{m+2}^*$ is given in (\ref{eq:e*(m+2)}).
They satisfy 
\begin{equation}
\label{eq:H-inv}
M_{pq}^\a\; H\; N_{pq}^{-\a}=H.
\end{equation}
\end{cor}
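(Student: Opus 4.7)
The plan is to translate the coordinate-free expressions (\ref{eq:cM-exp}) and (\ref{eq:cN-exp}) from Theorem \ref{th:main} directly into matrix form against the chosen bases, and then to read off the identity (\ref{eq:H-inv}) from the monodromy invariance of the intersection form supplied by Proposition \ref{prop:keep-int} (\ref{item:invariant}).

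The first step is a uniform expansion. Remark \ref{rem:extended-cycles}, together with the conventions $\ex_{m+1} = 0 = \ex^*_{m+1}$ and the explicit formulas (\ref{eq:e(m+2)}), (\ref{eq:e*(m+2)}), gives $\ell^\a_k = \ex_k \tr(\ell^\a_0,\ldots,\ell^\a_m)$ and $\g^{-\a}_k = (\g^{-\a}_0,\ldots,\g^{-\a}_m)\ex^*_k$ for every $k \in \{0,1,\ldots,m+2\}$. Subtracting the appropriate pair and multiplying by $\l_p^{-1}-1$, $\l_q^{-1}-1$ in the dual case yields $\ell^\a_{\imath(p)\imath(q)} = \bv^\a_{\imath(p)\imath(q)} \tr(\ell^\a_0,\ldots,\ell^\a_m)$ and $\g^{-\a}_{\imath(p)\imath(q)} = (\g^{-\a}_0,\ldots,\g^{-\a}_m) \bw^{-\a}_{\imath(p)\imath(q)}$. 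Bilinearity of the intersection form combined with the matrix $H$ of (\ref{eq:int-mat}) then gives $\la \bv \tr(\ell^\a_\bu),\,(\g^{-\a}_\bu)\bw^*\ra = \bv H \bw^*$ for arbitrary coefficient row- and column-vectors $\bv$ and $\bw^*$.

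To obtain (\ref{eq:Mpq}), I would apply (\ref{eq:cM-exp}) to each basis element $\ell^\a_k$ for $0\le k\le m$, whose row of coefficients is $\ex_k$: the image is $\ell^\a_k - \l_p\l_q\,(\ex_k H \bw^{-\a}_{\imath(p)\imath(q)})\,\ell^\a_{\imath(p)\imath(q)}$, and its row of coefficients is $\ex_k - \l_p\l_q\,(\ex_k H \bw^{-\a}_{\imath(p)\imath(q)})\,\bv^\a_{\imath(p)\imath(q)}$. Stacking these rows for $k=0,1,\ldots,m$ gives exactly the matrix in (\ref{eq:Mpq}). Dually, applying (\ref{eq:cN-exp}) to each $\g^{-\a}_k$ produces the column $\ex^*_k + \bw^{-\a}_{\imath(p)\imath(q)}(\bv^\a_{\imath(p)\imath(q)} H \ex^*_k)$, and assembling these columns for $k=0,\ldots,m$ yields (\ref{eq:Npq}).

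The identity (\ref{eq:H-inv}) then follows immediately: by Proposition \ref{prop:keep-int} (\ref{item:invariant}), $\la \cM^\a_{pq}(\ell^\a), \cN^{-\a}_{pq}(\g^{-\a})\ra = \la \ell^\a, \g^{-\a}\ra$, and writing both sides in coordinates gives $\bv (M^\a_{pq} H N^{-\a}_{pq})\bw^* = \bv H \bw^*$ for all $\bv$ and $\bw^*$, whence the matrix identity. No step is genuinely hard; the one place calling for care is the treatment of the indices $\imath(p), \imath(q) \in \{m+1, m+2\}$, since $\ell^\a_{m+1}, \ell^\a_{m+2}$ are not among the basis elements, but this is handled without case analysis once the conventions on $\ex_k$ and $\ex^*_k$ from Remark \ref{rem:extended-cycles} are adopted from the outset.
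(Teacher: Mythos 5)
Your proposal is correct and follows essentially the same route as the paper: expand $\ell^\a_{\imath(p)\imath(q)}$ and $\g^{-\a}_{\imath(p)\imath(q)}$ in the chosen bases using the conventions of Remark \ref{rem:extended-cycles}, express the pairings via $H$, substitute into (\ref{eq:cM-exp}) and (\ref{eq:cN-exp}), and deduce (\ref{eq:H-inv}) from Proposition \ref{prop:keep-int} (\ref{item:invariant}). The only cosmetic difference is that you apply the maps to each basis element and stack rows/columns, whereas the paper works with a general coefficient vector $\bv^\a$ (resp.\ $\bw^{-\a}$); these are equivalent.
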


\begin{proof}
The spaces $\lfSV{\a}$ and $\SV{-\a}$ are identified with $\C^{m+1}$ by 
\begin{align*}
\lfSV{\a}\ni \ell^\a =
(v_0,v_1,\dots,v_m)\!\tr(\ell_0^\a,\ell_1^\a,\dots,\ell_m^\a)
&\!\leftrightarrow\! \bv ^\a\!=\!(v_0,v_1,\dots,v_m)\in \C^{m+1},\\
\SV{-\a}\!\ni\! \g^{-\a}\! =\!
(\g_0^{-\a},\g_1^{-\a},\dots,\g_m^{-\a})\!\tr(w_0,w_1,\dots,w_m)
&\!\leftrightarrow \!
\bw^{-\a}\!=\!\tr(w_0,w_1,\dots,w_m) \in \C^{m+1}.
\end{align*}
We have 
$$
\begin{array}{cc}
\ell^\a_{\imath(p)\imath(q)}=\bv ^\a_{\imath(p)\imath(q)}
\tr(\ell_0^\a,\ell_1^\a,\dots,\ell_m^\a),
&
\g^{-\a}_{\imath(p)\imath(q)}=(\g_0^{-\a},\g_1^{-\a},\dots,\g_m^{-\a})
\; \bw^{-\a}_{\imath(p)\imath(q)}, 
\\[3mm]
\la \ell^\a,\g^{-\a}_{\imath(p)\imath(q)}\ra=\bv ^\a H \bw^{-\a}_{\imath(p)\imath(q)},
&
\la \ell_{\imath(p)\imath(q)}^\a,\g^{-\a}\ra=\bv ^\a_{\imath(p)\imath(q)}H\bw^{-\a},
\end{array}
$$
which imply that
\begin{align*}
\cM^\a_{pq}(\ell^\a)\!&=\!\ell^\a\!-\!\l_p\l_q
\la \ell^\a,\g^{-\a}_{\imath(p)\imath(q)}\ra\ell_{\imath(p)\imath(q)}^\a\!=\!
(\bv ^\a\!-\!\l_p\l_q \bv ^\a H\bw^{-\a}_{\imath(p)\imath(q)}
\bv ^\a_{\imath(p)\imath(q)})
\tr(\ell_0^\a,\ell_1^\a,\dots,\ell_m^\a),\\
\cN^{-\a}_{pq}(\g^{-\a})\!&=\!\g^{-\a}\!+\!
\la \ell_{\imath(p)\imath(q)}^\a,\g^{-\a}\ra\g^{-\a}_{\imath(p)\imath(q)}=
(\g_0^{-\a},\g_1^{-\a},\dots,\g_m^{-\a})
(\bw^{-\a}\!+\!\bw_{\imath(p)\imath(q)}^{-\a}
\bv ^\a_{\imath(p)\imath(q)} H \bw^{-\a}).
\end{align*}
Since 
$$\cM^\a_{pq}(\ell^\a)=\bv ^\a M^\a_{pq}
\tr(\ell_0^\a,\ell_1^\a,\dots,\ell_m^\a),\quad 
\cN^{-\a}_{pq}(\g^{-\a})=
(\g_0^{-\a},\g_1^{-\a},\dots,\g_m^{-\a})N^{-\a}_{pq}\bw^{-\a},
$$
the expressions (\ref{eq:Mpq}) and (\ref{eq:Npq}) are obtained. 
The equality (\ref{eq:H-inv}) is a consequence from  
Proposition \ref{prop:keep-int} (\ref{item:invariant}). We can also show it 
by a direct computation using 
$\la \ell_{\imath(p)\imath(q)}^\a ,\g_{\imath(p)\imath(q)}^{-\a}\ra=
\bv^\a_{\imath(p)\imath(q)} H \bw^{-\a}_{\imath(p)\imath(q)}$
and (\ref{eq:intno-DV}).
\end{proof}

\section{Examples}
\label{sec:example}
We give examples of circuit matrices. We set 
$m=3$, $r=2$, $\a_0,\a_1\in \Z$, $\a_2\dots,\a_5\notin \Z$, $\a_2+\a_3\in \Z$, 
and $\imath$ is the identical permutation. The circuit matrices $M_{pq}^\a$ 
and $N_{pq}^{-\a}$ are given as follows:
\begin{alignat*}{3}
&M_{01}^\a=\left[ \begin {array}{cccc} 1&0&0&0\\ \noalign{\medskip}0&1&0&0
\\ \noalign{\medskip}0&0&1&0\\ \noalign{\medskip}0&0&0&1\end {array}
 \right], \quad &&
N_{01}^{-\a}= \left[ \begin {array}{cccc} 1&0&0&0\\ \noalign{\medskip}0&1&0&0
\\ \noalign{\medskip}0&0&1&0\\ \noalign{\medskip}0&0&0&1\end {array}
 \right], 
\\ 
%
&M_{02}^\a= \left[ \begin {array}{cccc} \l_2&0&1-\l_2&0
\\ \noalign{\medskip}0&1&0&0\\ \noalign{\medskip}0&0&1&0
\\ \noalign{\medskip}0&0&0&1\end {array} \right], 
\quad&& N_{02}^{-\a}=
 \left[ \begin {array}{cccc} \l_2^{-1}&0&{\frac {\l_2-1}{
\l_2}}&0\\ \noalign{\medskip}0&1&0&0\\ \noalign{\medskip}0&0&1&0
\\ \noalign{\medskip}0&0&0&1\end {array} \right], 
\\ 
&M_{03}^\a= \left[ \begin {array}{cccc} \l_3&0&0&1-\l_3
\\ \noalign{\medskip}0&1&0&0\\ \noalign{\medskip}0&0&1&0
\\ \noalign{\medskip}0&0&0&1\end {array} \right], 
\quad &&N_{03}^{-\a}= \left[ \begin {array}{cccc} \l_3^{-1}&0&{
\frac {\left(\l_2-1\right)  \left(\l_3 -1 \right) }{\l_2\l_3}}&{
\frac {\l_3-1}{\l_3}}\\ \noalign{\medskip}0&1&0&0
\\ \noalign{\medskip}0&0&1&0\\ \noalign{\medskip}0&0&0&1\end {array}
 \right], 
\\ 
&M_{12}^\a= \left[ \begin {array}{cccc} 1&0&0&0\\ \noalign{\medskip}0&\l_2&1-
\l_2&0\\ \noalign{\medskip}0&0&1&0\\ \noalign{\medskip}0&0&0&1
\end {array} \right], 
\quad &&
N_{12}^{-\a}=
 \left[ \begin {array}{cccc} 1&0&0&0\\ \noalign{\medskip}0&\l_2^{-1}
&{\frac {\l_2-1}{\l_2}}&0\\ \noalign{\medskip}0&0&1&0
\\ \noalign{\medskip}0&0&0&1\end {array} \right], 
\\ 
&M_{13}^\a=  \left[ \begin {array}{cccc} 1&0&0&0\\ \noalign{\medskip}0&\l_3&0&
1-\l_3\\ \noalign{\medskip}0&0&1&0\\ \noalign{\medskip}0&0&0&1
\end {array} \right], 
\quad &&
N_{13}^{-\a}=
 \left[ \begin {array}{cccc} 1&0&0&0\\ \noalign{\medskip}0&\l_3^{-1}
&{\frac { \left( \l_2-1 \right)  \left( \l_3-1 \right) 
}{\l_2\l_3}}&{\frac {\l_3-1}{\l_3}}
\\ \noalign{\medskip}0&0&1&0\\ \noalign{\medskip}0&0&0&1\end {array}
 \right],  
\\ 
&M_{14}^\a=  \left[ \begin {array}{cccc} 1&0&0&0\\ \noalign{\medskip}0&\l_4&0&0
\\ \noalign{\medskip}0&0&1&0\\ \noalign{\medskip}0&0&0&1\end {array}
 \right], 
\quad &&
N_{14}^{-\a}=
 \left[ \begin {array}{cccc} 1&0&0&0\\ \noalign{\medskip}0&\l_4^{-1}
&{\frac {1-\l_2}{\l_2\l_4}}&{\frac {1-\l_3}{
\l_3\l_4}}\\ \noalign{\medskip}0&0&1&0\\ \noalign{\medskip}0
&0&0&1\end {array} \right], 
\\ 
&M_{23}^\a=  \left[ \begin {array}{cccc} 1&0&0&0\\ \noalign{\medskip}0&1&0&0
\\ \noalign{\medskip}0&0&2-\l_2&\l_2-1\\ \noalign{\medskip}0&0
&1-\l_2&\l_2\end {array} \right], 
\quad &&
N_{23}^{-\a}=
 \left[ \begin {array}{cccc} 1&0&0&0\\ \noalign{\medskip}0&1&0&0
\\ \noalign{\medskip}0&0&{\frac {2\l_2-1}{\l_2}}&1-\l_2
\\ \noalign{\medskip}0&0&{\frac {\l_2-1}{\l_2^{2}}}&\l_2^{-1}\end {array} \right], 
\\ 
&M_{24}^\a=  \left[ \begin {array}{cccc} 1&0&\l_2-1&0\\ \noalign{\medskip}0&1
&\l_2-1&0\\ \noalign{\medskip}0&0&\l_2\l_4&0
\\ \noalign{\medskip}0&0&\l_4 \left( \l_2-1 \right) &1
\end {array} \right], 
\quad &&
N_{24}^{-\a}=
 \left[ \begin {array}{cccc} 1&0&0&0\\ \noalign{\medskip}0&1&0&0
\\ \noalign{\medskip}0&0&{\frac {1}{\l_2\l_4}}&{\frac {1-
\l_3}{\l_3\l_4}}\\ \noalign{\medskip}0&0&0&1\end {array}
 \right], 
\\ 
&M_{34}^\a=\left[ \begin {array}{cccc} 1&0&0&\l_3-1\\ \noalign{\medskip}0&1
&0&\l_3-1\\ \noalign{\medskip}0&0&1&\l_3-1
\\ \noalign{\medskip}0&0&0&\l_3\l_4\end {array} \right], 
\quad &&
N_{34}^{-\a}=
 \left[ \begin {array}{cccc} 1&0&0&0\\ \noalign{\medskip}0&1&0&0
\\ \noalign{\medskip}0&0&1&0\\ \noalign{\medskip}0&0&{\frac {1-
\l_2}{\l_2}}&{\frac {1}{\l_3\l_4}}\end {array}
 \right].
\end{alignat*}

\section{Identification of $\lfSV{\a}$ and $\Sol_x(a,b,c)$}
\label{sec:idetify}
We define a holomorphic $1$-from $\w$ on $T_x$ by 
$$\w=d\log u_x(t)=\sum_{i=0}^{m+1} \frac{\a_i}{t-x_i}dt.$$
A twisted cohomology group and 
that with compact support are defined by 
\begin{equation}
\label{eq:twisted-cohomology}
\begin{array}{rcl}
\tC{\a}\!&\!=\!&\! \ker(\na_{\w}:\cE^1(T_x)\to \cE^2(T_x))/\na_{\w}
(\cE^0(T_x)),\\[2mm]
\ctC{\a}\!&\!=\!&\! \ker(\na_{\w}:\cE_c^1(T_x)\to \cE_c^2(T_x))/\na_{\w}
(\cE_c^0(T_x)),
\end{array}
\end{equation}
respectively, where $\cE^k(T_x)$ and $\cE_c^k(T_x)$ are  
the vector space of smooth $k$-forms on $T_x$ and that 
with compact support, and $\na_\w$ is a twisted exterior derivative 
$d+\w\wedge$. 
There is the  natural map $\jmath^\a_c:\ctC{\a} \to \tC{\a}$ by 
the inclusion $\cE_c^k(T_x)\hookrightarrow \cE^k(T_x)$. 
We can regard $\ctC{\a}$ and $\tC{\a}$ as the dual spaces of 
$\lftH{\a}$ and $\tH{\a}$  via the pairings 
$$
\la \ell^{\a},\psi_c\ra=\sum_{\mu} \int_{\sigma_\mu} u(t,x)\psi_c, 
 \quad 
\la \g^{\a},\psi\ra=\sum_{\nu} \int_{\tau_\nu}
u(t,x)\psi,
$$
where $\ell^{\a}=\sum_{\mu}(\sigma_\mu,u_x)\in\lftH{\a}$, $\psi_c\in \ctC{\a}$ 
and $\g^{\a}=\sum_{\nu} (\tau_\nu,u_x)\in \tH{\a}$, $\psi\in \tC{\a}$. 
Period matrices  $\varPi_c^{lf}(\a,x)$ and $\varPi(\a,x)$ are defined by 
the pairings of bases of $\lftH{\a}$ and $\ctC{\a}$, 
and those of $\tH{\a}$ and $\tC{\a}$, respectively. 
Under the condition (\ref{eq:old-condition}), we can construct 
$\varPi_c^{lf}(\a,x)$ and $\varPi(\a,x)$ so that 
each column vector of them is a fundamental system of solutions 
to $\cF_D(a',b',c')$ for some $(a',b',c')$, 
of which difference from $(a,b,c)$ is an integral vector.
On the other hand, under the condition (\ref{eq:non-int}), 
it happens that $\varPi_c^{lf}(\a,x)$ and $\varPi(\a,x)$ do not include a 
fundamental system of solutions to $\cF_D(a,b,c)$. 
For $\varPi^{lf}_c(\a,x)$, there is a case when 
the form $dt/(t-1)$ in (\ref{eq:int-rep}) does not belong to the image of 
the natural map $\jmath^\a_c$. 
For $\varPi(\a,x)$, if $\a_k\in\Z$ and 
$u_x(t)dt/(t-1)$ is single valued holomorphic around $t=x_i$, 
then 
$$\la \g^\a_{\imath(k)},dt/(t-1) \ra= 
\int_{\circlearrowleft_k}u_x(t)\frac{dt}{t-1}=0.$$
In spite of this situation, 
$\cM^\a$ and $\cN^\a$ can be regarded as the monodromy representations of 
$\varPi_c^{lf}(\a,x)$ and $\varPi(\a,x)$, respectively, since the 
pairing between $\lftH{\a}$ and $\ctC{\a}$ and that between  
$\tH{\a}$ and $\tC{\a}$ are perfect, and the bases of 
$\ctC{\a}$ and $\tC{\a}$ can be extended to global frames of 
vector bundles over $X$ with fibers $\ctC{\a}$ and $\tC{\a}$.

In general,  the stalk of $\lfLS{\a}$ at $x$ cannot be regarded as 
the local solution space to $\cF_D(a,b,c)$ around $x$ 
under only the condition (\ref{eq:non-int}). 
Hence we need additional conditions to regard $\cM^\a$ as 
the monodromy representation of $\cF_D(a,b,c)$. 
Hereafter, we assume that there exist at least three 
non-integral parameters $\a_{i_m}$, $\a_{i_{m+1}}$ and $\a_{i_{m+2}}$ 
in $\a$. 
If $\a_{i_m}+\a_{i_{m+1}}=\a_{i_m}+\a_{i_{m+2}}= 
\a_{i_{m+1}}+\a_{i_{m+2}}=0$ then 
$\a_{i_m}=\a_{i_{m+1}}=\a_{i_{m+2}}=0$. Thus   
we have the condition (\ref{eq:our-condition}) in this case. 
We shift $\a$ to 
\begin{equation}
\label{eq:new-parametar}
\Hat\a=\a +\cex_{i_{m+1}}+\cex_{i_{m+2}}-\cex_{m+1}-\cex_{m+2},
\end{equation}
where $\cex_0=(1,0,\dots,0)$, $\cex_1=(0,1,0,\dots,0)$, \dots, 
$\cex_{m+2}=(0,\dots,0,1)$ 
are the unit row vectors of size $m+3$.
Note that the condition (\ref{eq:cond-wa}) 
is also satisfied by  $\Hat\a$.
We have $\Hat u(t,x)$, $\Hat\w$, $\lftH{\pm\Hat\a}$, $\tH{\pm\Hat\a}$, 
$\ctC{\pm\Hat\a}$ and  $\tC{\pm\Hat\a}$ for the shifted $\Hat \a$.


\begin{proposition}
\label{prop:compact-supp}
Suppose that 
(\ref{eq:our-condition}) and 
none of $\Hat\a_0,\dots,\Hat\a_{m+2}$ is a negative integer:
\begin{equation}
\label{eq:non-negative}
\Hat\a_i\notin \{-1,-2,-3,\dots\}\quad (0\le i\le m+2).
\end{equation}
The form 
$$\f=\frac{dt}{t-x_{i_{m+1}}}-\frac{dt}{t-x_{i_{m+2}}}$$
represents a non-zero element in both spaces 
$\ctC{\Hat\a}$ and $\tC{-\Hat\a}$.

\end{proposition}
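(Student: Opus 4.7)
The plan is to prove both non-vanishings by pairing with explicit cycles and reducing to Euler-type integrals whose Gamma-function values are manifestly nonzero under the hypotheses.

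First, to lift the class of $\f$ to $\ctC{\Hat\a}$, near each puncture $x_i$ of $T_x$ (including $t=\infty$) I solve the twisted equation $\na_{\Hat\w}(g_i)=\f$ as a convergent power series: expanding $\Hat\w=\frac{\Hat\a_i}{t-x_i}dt+(\text{holomorphic})$ and matching Taylor coefficients gives the indicial recursion $(n+\Hat\a_i)g_{i,n}=(\text{lower-order terms})$, which is solvable for all $n\ge 0$ exactly when $\Hat\a_i\notin\Z_{<0}$, i.e.\ under (\ref{eq:non-negative}). The leading equation $\Hat\a_i g_{i,0}=\pm 1$ at the two poles $x_{i_{m+1}},x_{i_{m+2}}$ of $\f$ is solvable because (\ref{eq:our-condition}) forces $\Hat\a_{i_{m+1}},\Hat\a_{i_{m+2}}\ne 0$; at the remaining punctures the choice $g_{i,0}=0$ works. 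Patching the $g_i$ with a smooth partition of unity $\{\chi_i\}$ gives $g=\sum_i\chi_i g_i\in\cE^0(T_x)$; then $\f_c:=\f-\na_{\Hat\w}(g)$ is compactly supported in $T_x$ and represents the class $[\f]$ in $\tC{\Hat\a}$, providing the desired lift to $\ctC{\Hat\a}$. Since $\f$ is already a closed holomorphic $(1,0)$-form on the Riemann surface $T_x$, it represents a class in $\tC{-\Hat\a}$ directly.

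For non-vanishing I compute the pairings $\la\ell_m^{\Hat\a},\f_c\ra$ and $\la\g_m^{-\Hat\a},\f\ra$. Substituting $\f_c=\f-\na_{\Hat\w}(g)$ in the first and using $\Hat u_x\na_{\Hat\w}(g)=d(\Hat u_x g)$, together with the vanishing of the boundary values of $\Hat u_x g$ at both endpoints (which holds for $\re(\Hat\a_{i_m}),\re(\Hat\a_{i_{m+1}})>0$ and is extended to all allowed $\Hat\a$ by analytic continuation), gives
\[\la\ell_m^{\Hat\a},\f_c\ra\;=\;\int_{x_{i_m}}^{x_{i_{m+1}}}\Hat u_x(t)\,\f(t)\;=:\;J_+.\]
For the second, the cycle $\g_m^{-\Hat\a}$ in (\ref{eq:cycle}) is precisely the standard (Pochhammer-type) regularization of the arc $\overleftarrow{\;x_{i_m}x_{i_{m+1}}}$, so
\[\la\g_m^{-\Hat\a},\f\ra\;=\;(\l_{i_m}^{-1}-1)(\l_{i_{m+1}}^{-1}-1)\int_{x_{i_m}}^{x_{i_{m+1}}}\Hat u_x(t)^{-1}\,\f(t)\;=:\;(\l_{i_m}^{-1}-1)(\l_{i_{m+1}}^{-1}-1)J_-,\]
again in the sense of analytic continuation in $\Hat\a$. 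After the affine substitution $s=(t-x_{i_m})/(x_{i_{m+1}}-x_{i_m})$ and power-series expansion of the bounded factors $(t-x_j)^{\pm\Hat\a_j}$ on $[0,1]$ for $j\ne i_m,i_{m+1}$, each $J_\pm$ becomes a convergent sum of classical Beta values $B(1\pm\Hat\a_{i_m}+k,\,\pm\Hat\a_{i_{m+1}})=\G(1\pm\Hat\a_{i_m}+k)\G(\pm\Hat\a_{i_{m+1}})/\G(1\pm(\Hat\a_{i_m}+\Hat\a_{i_{m+1}})+k)$. Combining with the prefactors $(\l_{i_m}^{\pm 1}-1)(\l_{i_{m+1}}^{\pm 1}-1)$ through the reflection formula $\G(z)\G(1-z)=\pi/\sin\pi z$ cancels all spurious zeros and poles, leaving a Gamma-function product whose arguments are explicit linear forms in the $\Hat\a_j$. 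Conditions (\ref{eq:non-negative}) and (\ref{eq:our-condition}) place every such argument outside $\Z_{\le 0}$, so the product is finite and nonzero; hence $[\f_c]\ne 0$ in $\ctC{\Hat\a}$ and $[\f]\ne 0$ in $\tC{-\Hat\a}$.

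The hard part will be the Gamma-function bookkeeping in the last step: one must separate the two sub-cases $x_{i_{m+2}}\in\C$ and $x_{i_{m+2}}=\infty$ (in the latter, $\f$ reduces to a single logarithmic form and one of the two Beta-type pieces is absent), and justify the analytic continuation in $\Hat\a$ of the boundary-term cancellation identifying $\la\ell_m^{\Hat\a},\f_c\ra$ with $J_+$ uniformly across the allowed parameter region.
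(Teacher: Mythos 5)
Your construction of the compactly supported representative $\f_c$ (solving $\na_{\Hat\w}g=\f$ near each puncture via the indicial recursion and patching with a partition of unity) is sound and is exactly what the paper does by citing \cite[\S4]{M1}; the conditions (\ref{eq:non-negative}) and $\Hat\a_{i_{m+1}},\Hat\a_{i_{m+2}}\ne0$ enter there just as you say. The gap is in the non-vanishing step. The pairing $\la\ell_m^{\Hat\a},\f_c\ra=J_+$ is not a Gamma-function product: after your substitution it is a convergent series $\sum_k c_k(x)\,B(1+\Hat\a_{i_m}+k,\Hat\a_{i_{m+1}})$ whose coefficients depend on $x$ and on all the remaining exponents, i.e.\ it is a value of a Lauricella-type hypergeometric function at the particular point $x$ (already for $m=1$ it is a ${}_2F_1$ value, not a Beta integral). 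Such a single period can vanish at particular $x$ even when the cohomology class is non-zero, so showing $[\f_c]\ne0$ this way would require proving that a specific transcendental function of $x$ is nowhere zero --- which is not delivered by any reflection-formula bookkeeping. A telling symptom: the Gamma arguments you list never involve the combination $\Hat\a_{i_{m+1}}+\Hat\a_{i_{m+2}}$, yet the hypothesis $\a_{i_{m+1}}+\a_{i_{m+2}}\ne0$ from (\ref{eq:our-condition}) is indispensable --- the paper's Remark 6.3 exhibits parameters with $\a_{m+2}=-\a_{m+1}$ for which $\f=-\a_{m+1}^{-1}\na_{-\w}(1)$ is exact and the class \emph{is} zero. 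Any correct non-vanishing argument must see that combination. (There is also a smaller slip: pairing $\g_m^{-\Hat\a}$ from (\ref{eq:cycle}) with a form produces the single prefactor $\l_{i_m}^{-1}-1$ after the circle contributions die, not the product $(\l_{i_m}^{-1}-1)(\l_{i_{m+1}}^{-1}-1)$.)

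The paper's proof avoids periods altogether: it pairs $\f_c\in\ctC{\Hat\a}$ against $\f\in\tC{-\Hat\a}$ under the cohomological intersection form $\la\xi_c,\eta\ra=\int_{T_x}\xi_c\wedge\eta$ and evaluates this self-intersection number by \cite[Theorem 2.1]{M1} as
$$\la\f_c,\f\ra=2\pi\sqrt{-1}\,
\frac{\Hat\a_{i_{m+1}}+\Hat\a_{i_{m+2}}}{\Hat\a_{i_{m+1}}\Hat\a_{i_{m+2}}},$$
a rational function of the exponents alone, independent of $x$, and non-zero precisely by (\ref{eq:our-condition}). One computation then certifies that both classes are non-zero. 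If you want to salvage a homological-pairing argument, you would have to pair against a full basis of cycles and show the resulting vector of periods is non-zero, which amounts to the perfectness of the period pairing --- a heavier input than the local residue formula the paper uses.
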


\begin{proof} 
Under the assumption of this proposition, 
we can transform $\f$ into a cohomologous element $\f_c$ in $\cE_c^1(T_x)$ 
by following \cite[\S4]{M1}. The intersection form $\la \;, \ra$ 
between $\ctC{\Hat\a}$ and $\tC{-\Hat\a}$ is defined as 
$$\la \xi_c,\eta\ra=\int_{T_x} \xi_c\wedge \eta, \quad 
\xi_c\in \ctC{\Hat\a},\ \eta\in \tC{-\Hat\a}.$$
By using \cite[Theorem 2.1]{M1}, we have the intersection number 
$$\la \f_c,\f\ra=
2\pi\sqrt{-1} 
\frac{\Hat\a_{i_{m+1}}+\Hat\a_{i_{m+2}}}{\Hat\a_{i_{m+1}}\Hat\a_{i_{m+2}}},
$$
which does not vanish. 
\end{proof}

\begin{remark}
The condition 
$(\ref{eq:our-condition})$ is essential. 
When $\a_0=\dots=\a_m=0$ and $\a_{m+2}=-\a_{m+1}\in \C-\Z$, 
we have $\Hat\a=\a$ and $\w=\a_{m+1}\dfrac{dt}{t-1}$.
Since $\na_{-\w}( 1)=-\a_{m+1}\f$, and $\f$ is the zero element of $\tC{-\a}$.
\end{remark}

\begin{theorem}
\label{th:idetify}
Under the conditions 
(\ref{eq:our-condition}) and (\ref{eq:non-negative}), 
the local solution space $\Sol_{x}(a,b,c)$ to 
$\cF_D(a,b,c)$ around $x$ is identified with 
the stalk of $\lfLS{\a}$ over $x$.  
\end{theorem}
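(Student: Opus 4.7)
The plan is to construct a period map $P:\lftH{\a}\to\Sol_x(a,b,c)$ and show it is a bijection by injectivity together with a dimension count. Both spaces have dimension $m+1$ (the source by Fact \ref{fact:dim}, the target by the known rank of $\cF_D(a,b,c)$), so the theorem reduces to producing a holomorphic, injective, horizontal linear map.

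First I would pass to the shifted parameters $\Hat\a$ of (\ref{eq:new-parametar}). Condition (\ref{eq:our-condition}) is preserved under this shift, so $\lftH{\Hat\a}$ is again $(m+1)$-dimensional and carries a canonical identification with $\lftH{\a}$ obtained by translating the multivalued branch attached to each topological cycle. For $\ell^\a\leftrightarrow\ell^{\Hat\a}$ I then set
$$P(\ell^\a)(x)\;=\;\int_{\ell^{\Hat\a}}\Hat u(t,x)\,\f_c,$$
where $\f_c\in\ctC{\Hat\a}$ is the compactly supported representative (produced as in \cite[\S4]{M1}) of the form $\f=dt/(t-x_{i_{m+1}})-dt/(t-x_{i_{m+2}})$ supplied by Proposition \ref{prop:compact-supp}. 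A direct algebraic calculation of $\Hat u(t,x)\,\f$ shows that the $t$-dependence reduces, up to an $x$-dependent but $t$-independent non-vanishing factor, to $u(t,x)\,dt/(t-1)$; thus $P(\ell^\a)$ is, up to a non-zero rational factor in $x$, an Euler-type integral of the shape (\ref{eq:int-rep}), and in particular lies in $\Sol_x(a,b,c)$.

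For injectivity, suppose $P(\ell^\a)\equiv 0$ on a neighborhood of $x$. Then all $x$-partial derivatives of $\la\ell^{\Hat\a},\f_c\ra$ vanish; differentiating under the integral (legal because $\f_c$ has compact support) gives $\la\ell^{\Hat\a},\psi\ra=0$ for a family of classes $\psi\in\ctC{\Hat\a}$ generated from $\f_c$ by Gauss--Manin type operators. Under (\ref{eq:non-negative}) this family spans the whole $(m+1)$-dimensional space $\ctC{\Hat\a}$, so the perfectness of the pairing $\lftH{\Hat\a}\times\ctC{\Hat\a}\to\C$ forces $\ell^{\Hat\a}=0$ and hence $\ell^\a=0$. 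Horizontal compatibility of $P$ follows from Proposition \ref{prop:stable}(\ref{item:commute}), so $P$ identifies stalks rather than merely fibers.

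The main obstacle will be the spanning statement used for injectivity. Without (\ref{eq:our-condition}) or (\ref{eq:non-negative}), $\f$ can be twisted-exact (as in the remark following Proposition \ref{prop:compact-supp}), in which case $\f_c=0$ in $\ctC{\Hat\a}$ and $P$ collapses to the zero map. A concrete way to bypass the abstract Gauss--Manin argument is to display an explicit basis $\psi_0,\ldots,\psi_m$ of $\ctC{\Hat\a}$ built from $\f_c$ via contiguity operators, compute the $(m+1)\times(m+1)$ period matrix $\bigl(\int_{\ell_k^{\Hat\a}}\Hat u\,\psi_j\bigr)$ in the basis of $\lftH{\Hat\a}$ from Proposition \ref{prop:perfect}, and verify that its determinant does not vanish under our hypotheses; this both completes the proof and makes the identification $\Sol_x(a,b,c)\cong\lftH{\a}$ explicit.
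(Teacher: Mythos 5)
Your proposal is correct and follows essentially the same route as the paper: pair locally finite cycles (after the shift $\a\mapsto\Hat\a$) with the compactly supported representative $\f_c$ of $\f$ from Proposition \ref{prop:compact-supp}, observe that $\Hat u(t,x)\,\f$ is $u(t,x)\,dt/(t-1)$ up to a nonvanishing $x$-factor, and deduce injectivity (hence bijectivity by the dimension count) from the fact that $\f_c$ and its derivatives $\psi_i=(\pa_i-\a_i/(t-x_i))\f_c$ span $\ctC{\Hat\a}$ together with the perfectness of the pairing between $\lftH{\Hat\a}$ and $\ctC{\Hat\a}$. The spanning step you flag as the main obstacle is exactly the one the paper settles by noting that a nonzero solution of the rank-$(m+1)$ system $\cF_D(a,b,c)$ and its first partials are linearly independent, so the $m+1$ classes $\f_c,\psi_1,\dots,\psi_m$ must already span the $(m+1)$-dimensional space $\ctC{\Hat\a}$.
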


\begin{proof}
Note that 
$$u(t,x)\frac{dt}{t-x_{m+1}}=\Hat u(t,x)\frac{\f}{x_{i_{m+1}}-x_{i_{m+2}}}
.$$ 
Since $\f$ is cohomologous to $\f_c\in \ctC{\Hat\a}$ by Proposition 
\ref{prop:compact-supp}, 
if the improper integral
$$\int_{x_{i_{m+1}}}^{x_{i_{k}}} \Hat u(t,x)\f$$
converges, then it coincides with the pairing $\la \ell_k^{\Hat\a},\f_c\ra$, 
which belongs to $\Sol_x(a,b,c)$. 
We claim that $\la \ell_k^{\Hat\a},\f_c\ra$ ($k=0,1,\dots,m$)  
span $\Sol_x(a,b,c)$. 
Since $\ell_k^{\Hat\a}$'s are linearly independent, we have to show the 
kernel of the map 
$$
\dfc:\lfSV{\Hat\a}\ni \ell^{\Hat\a} \mapsto 
\la \ell^{\Hat\a},\f_c\ra\in \Sol_x(a,b,c)
$$
is zero.
Since $\la \ell_k^{\Hat\a},\f_c\ra$ is a non-zero solution to 
$\cF_D(a,b,c)$ around $\dot x$, 
$\la \ell_k^{\Hat\a},\f_c\ra $ and  
$$\pa_i\la \ell_k^{\Hat\a},\f_c\ra=\la 
\ell_k^{\Hat\a},(\pa_i - \frac{\a_i}{t-x_i})\cdot \f_c \ra\quad (1\le i\le m)
$$ 
are linearly independent.
Hence $\ctC{\Hat\a}$ is spanned by 
$$\f_c,\quad  \psi_i=(\pa_i - \frac{\a_i}{t-x_i})\cdot \f_c\quad \quad (1\le i\le m).
$$
If $\ker\dfc\ne0$, then there is a non trivial relations among 
$\la \ell_k^{\Hat\a},\f_c\ra$'s. 
Then the period matrix 
$\la \ell_k^{\Hat\a},\psi_j\ra$ for bases 
$\ell_0^{\Hat\a},\ell_1^{\Hat\a},\dots,\ell_m^{\Hat\a}$ of $\lftH{\Hat\a}$
and $\psi_0=\f_c,\psi_1,\dots,\psi_m$ of $\ctC{\Hat\a}$ 
degenerates, since the linear relation is preserved under the 
action of the Weyl algebra. 
This contradicts to the perfectness of the 
pairing between $\lftH{\Hat\a}$ and $\ctC{\Hat\a}$. 
\end{proof}



\begin{cor}
Under the conditions 
(\ref{eq:our-condition}) and (\ref{eq:non-negative}), 
the circuit transformation of $\Sol_{\dot x}(a,b,c)$ 
along the loop $\rho_{pq}$ coincides with $\cM^\a_{pq}$ in 
(\ref{eq:cM-exp}). 
Its circuit matrix  with respect to the basis 
$$\tr\big(\la \ell_0^{\Hat\a},\f_c\ra,\la \ell_1^{\Hat\a},\f_c\ra,\dots,
\la \ell_m^{\Hat\a},\f_c \ra\big)$$ 
coincides with $M^\a_{pq}$ in (\ref{eq:Mpq}).
\end{cor}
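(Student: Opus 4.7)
The plan is to transport the monodromy action of $\lfSV{\Hat\a}$ to that of $\Sol_{\dot x}(a,b,c)$ via the isomorphism $\dfc$ established in Theorem \ref{th:idetify}, and then observe that the circuit data for $\a$ and for $\Hat\a$ coincide because the formulas of Theorem \ref{th:main} and Corollary \ref{cor:rep-mat} depend on $\a$ only through the monodromy characters $\l_i=\exp(2\pi\sqrt{-1}\a_i)$, which are unaffected by the integer shift (\ref{eq:new-parametar}).

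The first step is to promote $\f_c\in \ctC{\Hat\a}$, constructed fibrewise in Proposition \ref{prop:compact-supp}, to a flat section of the local system of cohomology $\bigcup_{x\in X}\ctC{\Hat\a}$. The replacement of $\f$ by a compactly supported cohomologous form in \cite[\S4]{M1} uses only cutoff functions around the punctures $x_0,\dots,x_{m+1}$; since these punctures move continuously with $x\in X$, the cutoffs can be chosen to deform continuously as well, producing a global flat frame $\f_c$. With this in place, for any loop $\rho$ based at $\dot x$ and any section $\ell^{\Hat\a}$ of $\lfSV{\Hat\a}$, analytic continuation of the solution $\la \ell^{\Hat\a},\f_c\ra$ along $\rho$ is given by $\la \rho_*\ell^{\Hat\a},\f_c\ra=\la \cM^{\Hat\a}_\rho(\ell^{\Hat\a}),\f_c\ra$, so $\dfc$ intertwines $\cM^{\Hat\a}_{pq}$ with the circuit transformation on $\Sol_{\dot x}(a,b,c)$.

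Because $\Hat\a-\a\in\Z^{m+3}$, we have $\hat\l_i=\l_i$ for every $i$, so the local systems $\lfLS{\a}$ and $\lfLS{\Hat\a}$ are canonically identified, sending the basis $\ell_k^{\Hat\a}$ to $\ell_k^{\a}$, and the intersection matrix $H$ of (\ref{eq:int-mat}) is common to both parameter vectors. The expressions (\ref{eq:cM-exp}) and (\ref{eq:Mpq}) therefore yield $\cM^{\Hat\a}_{pq}=\cM^\a_{pq}$ and $M^{\Hat\a}_{pq}=M^\a_{pq}$. Since the basis $\tr(\la \ell_0^{\Hat\a},\f_c\ra,\dots,\la \ell_m^{\Hat\a},\f_c\ra)$ of $\Sol_{\dot x}(a,b,c)$ is by construction the image under $\dfc$ of $\tr(\ell_0^{\Hat\a},\dots,\ell_m^{\Hat\a})$, the circuit matrix of the monodromy on $\Sol_{\dot x}(a,b,c)$ in this basis equals $M^\a_{pq}$.

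The main technical obstacle is the very first step: arranging that $\f_c$ is genuinely a flat section of $\bigcup_{x\in X}\ctC{\Hat\a}$ rather than an $x$-by-$x$ choice. Once the cutoff construction from \cite[\S4]{M1} is made to depend continuously on $x$ (which is possible on small neighborhoods in $X$ where the punctures remain separated), the remaining assertions follow formally from the identifications recorded above.
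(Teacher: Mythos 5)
Your proposal is correct and follows essentially the same route as the paper: the paper's proof likewise combines the identification of $\Sol_{\dot x}(a,b,c)$ with $\lfSV{\Hat\a}$ from Theorem \ref{th:idetify} with the observation that $\l_i=\exp(2\pi\sqrt{-1}\a_i)=\exp(2\pi\sqrt{-1}\Hat\a_i)$, so the shift $\a\to\Hat\a$ induces a natural isomorphism of the homology local systems under which the circuit data coincide. Your additional care about promoting $\f_c$ to a flat section is a detail the paper leaves implicit (it notes in \S\ref{sec:idetify} that bases of $\ctC{\a}$ extend to global frames over $X$), not a different argument.
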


\begin{proof}
Since $\l_i=\exp(2\pi\sqrt{-1}\a_i)=\exp(2\pi\sqrt{-1}\Hat\a_i)$, 
there is a natural isomorphism between 
$\lftH{\a}$ and $\lftH{\Hat\a}$ given by the parameter shift $\a\to\Hat\a$. 
Theorem \ref{th:idetify} yields this corollary. 
\end{proof}

We shift $\a$ to 
\begin{equation}
\label{eq:negative-parametar}
\Check\a=\a -\cex_{i_{m+1}}-\cex_{i_{m+2}}+\cex_{m+1}+\cex_{m+2}.
\end{equation}
We have $\Check u(t,x)$, $\Check\w$, $\lftH{\pm\Check\a}$, $\tH{\pm\Check\a}$, 
$\ctC{\pm\Check\a}$ and  $\tC{\pm\Check\a}$ for the shifted $\Check \a$.

\begin{cor}
Under the conditions 
(\ref{eq:our-condition}) and 
\begin{equation}
\label{eq:non-negative-0}
\Check\a_i\notin \{0,-1,-2,-3,\dots\}\quad (0\le i\le m+2),
\end{equation}
the circuit transformation of $\Sol_{\dot x}(-a,-b,-c)$ 
along the loop $\rho_{pq}$ coincides with $\cN^{-\a}_{pq}$ in 
(\ref{eq:cN-exp}). 
Its circuit matrix  with respect to the basis 
$$\tr\big(\la \g_0^{-\Check\a},\f\ra,\la \g_1^{-\Check\a},\f\ra,\dots,
\la \g_m^{-\Check\a},\f\ra\big)$$ 
coincides with $\tr N^{-\a}_{pq}$ in (\ref{eq:Npq}).
\end{cor}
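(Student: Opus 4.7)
The plan is to mirror the argument of the preceding corollary on the dual
side: replace $(\lftH{\cdot},\ctC{\cdot})$ by $(\tH{\cdot},\tC{\cdot})$, the
shift (\ref{eq:new-parametar}) by (\ref{eq:negative-parametar}), and combine
this with the sign change $\a\mapsto-\a$. Since
$\exp(2\pi\sqrt{-1}\Check\a_i)=\exp(2\pi\sqrt{-1}\a_i)=\l_i$, the shift
$\a\mapsto\Check\a$ induces a natural isomorphism $\tH{-\Check\a}\cong\tH{-\a}$
carrying $\g_k^{-\Check\a}$ to $\g_k^{-\a}$ and intertwining circuit actions;
hence the monodromy of $\SV{-\Check\a}$ along $\rho_{pq}$, in the basis
$(\g_0^{-\Check\a},\dots,\g_m^{-\Check\a})$, will be represented by the matrix
$N_{pq}^{-\a}$ of (\ref{eq:Npq}).

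The substantive step is the dual analogue of Theorem \ref{th:idetify}: under
(\ref{eq:our-condition}) and (\ref{eq:non-negative-0}), the linear map
$\g^{-\Check\a}\mapsto\la\g^{-\Check\a},\f\ra$ is an isomorphism from
$\tH{-\Check\a}$ onto $\Sol_{\dot x}(-a,-b,-c)$. First I would check the
identity
\[u(t,x)^{-1}\frac{dt}{t-x_{m+1}}=\frac{\Check u(t,x)^{-1}\cdot\f}
{x_{i_{m+1}}-x_{i_{m+2}}},\]
which shows that $\la\g_k^{-\Check\a},\f\ra$ is, up to the non-zero factor
$x_{i_{m+1}}-x_{i_{m+2}}$, a value of the Euler-type integral
(\ref{eq:int-rep}) for parameters $(-a,-b,-c)$, hence lies in
$\Sol_{\dot x}(-a,-b,-c)$. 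Next I would prove a dual of
Proposition \ref{prop:compact-supp} to the effect that condition
(\ref{eq:non-negative-0}) forces $\f$ to be a non-zero class of
$\tC{-\Check\a}$ (via non-vanishing of its intersection number with a
compact-support regularisation), and then run the Weyl-algebra argument from
the proof of Theorem \ref{th:idetify}: the classes $\f$ and
$(\pa_i+\Check\a_i/(t-x_i))\cdot\f$ $(1\le i\le m)$ span $\tC{-\Check\a}$,
so any linear dependence among the $\la\g_k^{-\Check\a},\f\ra$ would, after
applying the $\pa_i$'s, force a vanishing period matrix, contradicting the
perfectness of the pairing $\tH{-\Check\a}\times\tC{-\Check\a}\to\C$.

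Granting this identification, naturality transports the monodromy of
$\Sol_{\dot x}(-a,-b,-c)$ along $\rho_{pq}$ onto $\cN^{-\a}_{pq}$. The
transpose in the conclusion is then a matter of row-versus-column convention:
the row $(\g_0^{-\Check\a},\dots,\g_m^{-\Check\a})$ is multiplied on the right
by $N_{pq}^{-\a}$, so its column transpose
$\tr(\la\g_0^{-\Check\a},\f\ra,\dots,\la\g_m^{-\Check\a},\f\ra)$ is multiplied
on the left by $\tr N_{pq}^{-\a}$, yielding the stated circuit matrix. The
step I expect to take the most care is the dual of
Proposition \ref{prop:compact-supp}; in particular, one has to explain why
(\ref{eq:non-negative-0}) must forbid $0$ in addition to the negative integers,
unlike (\ref{eq:non-negative}). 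This asymmetry reflects the fact that passing
from the shift (\ref{eq:new-parametar}) to (\ref{eq:negative-parametar}),
together with the switch from the compact-support to the ordinary de Rham side
of twisted duality, displaces the relevant integrality restriction by one.
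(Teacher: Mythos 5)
Your proposal is correct and follows essentially the same route as the paper: the key identity $u(t,x)^{-1}\,dt/(t-1)=\Check u(t,x)^{-1}\f/(x_{i_{m+1}}-x_{i_{m+2}})$ placing the periods $\la\g_k^{-\Check\a},\f\ra$ in $\Sol_{\dot x}(-a,-b,-c)$, the non-vanishing of the class of $\f$ on the $-\Check\a$ side, the Weyl-algebra spanning argument recycled from Theorem \ref{th:idetify}, and the transport through the natural isomorphism $\tH{-\a}\cong\tH{-\Check\a}$ coming from $\exp(2\pi\sqrt{-1}\,\Check\a_i)=\l_i$. The only cosmetic difference is that you propose to prove a dual of Proposition \ref{prop:compact-supp} directly for the shift $\Check\a$, whereas the paper obtains the needed non-vanishing by remarking that condition (\ref{eq:non-negative-0}) subsumes (\ref{eq:non-negative}).
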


\begin{proof}
Since the condition (\ref{eq:non-negative}) is satisfied 
under (\ref{eq:non-negative-0}), $\f$ represents a non-zero element  
of $\tH{-\Check\a}$.
Since 
$$
\frac{1}{u(t,x)}\cdot 
\frac{dt}{t-1}
=
\frac{1}{\Check u(t,x)}\cdot
\frac{\f}{x_{i_{m+1}}-x_{i_{m+2}}},\quad 
$$
$\la  \g^{-\Check\a}_k,\f\ra$ $(k=0,1,\dots,m)$ belong to 
$\Sol_{\dot x}(-a,-b,-c)$ and they do not vanish identically under 
(\ref{eq:non-negative-0}).
By using the same argument as in Proof of Theorem \ref{th:idetify}, 
we can show that they span $\Sol_{\dot x}(-a,-b,-c)$.
By the identification of $\SV{-\Check\a}$ and $\Sol_{\dot x}(-a,-b,-c)$ 
together with a natural isomorphism between $\tH{-\a}$ and $\tH{-\Check\a}$, 
we have this corollary.
\end{proof}

\end{document}